\author[M.~Balcerzak]{Marek Balcerzak}
\address{Institute of Mathematics, \L{}\'{o}d\'{z} University of Technology, ul. W\'{o}lcza\'{n}ska 215, 93-005 \L{}\'{o}d\'{z}, Poland}
\email{marek.balcerzak@p.lodz.pl}
\author[P.~Leonetti]{Paolo Leonetti}
\address{Department of Statistics, Universit\`a ``L. Bocconi'', via Roentgen 1, 20136 Milan, Italy}
\email{leonetti.paolo@gmail.com}
\urladdr{\url{https://sites.google.com/site/leonettipaolo/}} 
\keywords{Ideal limit point, ideal cluster point, asymptotic density, analytic P-ideal, regular closed set, equidistribution, co-analytic ideal, maximal ideal.}
\subjclass[2010]{Primary: 40A35. Secondary: 54A20, 40A05, 11B05.}
\title{On the Relationship between Ideal Cluster Points and Ideal Limit Points}
   \def\MR#1{}
\newtheorem{thm}{Theorem}[section]
\newtheorem{cor}[thm]{Corollary}
\newtheorem{lem}[thm]{Lemma}
\newtheorem{prop}[thm]{Proposition}
\theoremstyle{definition} 
\let\olddefi\defi
\renewcommand{\defi}{\olddefi\normalfont}
\newtheorem{example}[thm]{Example}
\let\oldexample\example
\renewcommand{\example}{\oldexample\normalfont}
\let\oldrmk\rmk
\renewcommand{\rmk}{\oldrmk\normalfont}
\newcommand{\clusterfin}{\mathrm{L}_x}
\providecommand{\MR}[1]{}
\providecommand{\bysame}{\leavevmode\hbox to3em{\hrulefill}\thinspace}
\providecommand{\MR}{\relax\ifhmode\unskip\space\fi MR }
\providecommand{\href}[2]{#2}
\begin{document}

\maketitle
\thispagestyle{empty}

\begin{abstract}
Let $X$ be a first countable space which admits a non-trivial convergent sequence and let $\mathcal{I}$ be an analytic P-ideal. First, it is shown that the sets of $\mathcal{I}$-limit points of all sequences in $X$ are closed if and only if $\mathcal{I}$ is also an $F_\sigma$-ideal.

Moreover, let $(x_n)$ be a sequence taking values in a Polish space without isolated points. It is known that the set $A$ of its statistical limit points is an $F_\sigma$-set, the set $B$ of its statistical cluster points is closed, and that the set $C$ of its ordinary limit points is closed, with $A\subseteq B\subseteq C$. It is proved the sets $A$ and $B$ own some additional relationship: indeed, the set $S$ of isolated points of $B$ is contained also in $A$.

Conversely, if $A$ is an $F_\sigma$-set, $B$ is a closed set with a subset $S$ of isolated points such that $B\setminus S\neq \emptyset$ is regular closed, and $C$ is a closed set with $S\subseteq A\subseteq B\subseteq C$, then there exists a sequence $(x_n)$ for which: $A$ is the set of its statistical limit points, $B$ is the set of its statistical cluster points, and $C$ is the set of its ordinary limit points.

Lastly, we discuss topological nature of the set of $\mathcal{I}$-limit points when $\mathcal{I}$ is neither $F_\sigma$- nor analytic P-ideal.
\end{abstract}


\section{Introduction}\label{sec:intro}

The aim of this article is to establish some relationship between the set of ideal cluster points and the set of ideal limit points of a given sequence.

To this aim, let $\mathcal{I}$ be an ideal on the positive integers $\mathbf{N}$, i.e., a collection of subsets of $\mathbf{N}$ closed under taking finite unions and subsets. It is assumed that $\mathcal{I}$ contains the collection $\mathrm{Fin}$ of finite subsets of $\mathbf{N}$ and it is different from the whole power set $\mathcal{P}(\mathbf{N})$. Note that the family $\mathcal{I}_0$ of subsets with zero asymptotic density, that is,
$$
\mathcal{I}_0:=\left\{S\subseteq \mathbf{N}: \lim_{n\to \infty} \frac{|S\cap \{1,\ldots,n\}|}{n} =0\right\}
$$
is an ideal. Let also $x=(x_n)$ be a sequence taking values in a topological space $X$, which will be always assumed hereafter to be Hausdorff. We denote by $\Lambda_x(\mathcal{I})$ the set of $\mathcal{I}$-limit points of $x$, that is, the set of all $\ell \in X$ for which  
$
\lim_{k\to \infty}x_{n_k}=\ell,
$ 
for some subsequence $(x_{n_k})$ such that $\{n_k: k \in \mathbf{N}\} \notin \mathcal{I}$. In addition, let $\Gamma_x(\mathcal{I})$ be the set of $\mathcal{I}$-cluster points of $x$, that is, the set of all $\ell \in X$ such that $\{n: x_n \in U\} \notin \mathcal{I}$ for every neighborhood $U$ of $\ell$. Note that $\clusterfin:=\Lambda_x(\mathrm{Fin})=\Gamma_x(\mathrm{Fin})$ is the set of ordinary limit points of $x$; we also shorten $\Lambda_x:=\Lambda_x(\mathcal{I}_0)$ and $\Gamma_x:=\Gamma_x(\mathcal{I}_0)$.

Statistical limit points and statistical cluster points (i.e., $\mathcal{I}_{0}$-limit points and $\mathcal{I}_0$-cluster points, resp.) of real sequences were introduced by Fridy \cite{MR1181163}, cf. also \cite{
MR1372186, MR2463821, MR1416085, 
MR1838788, MR1821765, MR1260176}.

We are going to provide in Section \ref{sec:topology}, under suitable assumptions on $X$ and $\mathcal{I}$, a characterization of the set of $\mathcal{I}$-limit points. Recall that $\Gamma_x(\mathcal{I})$ is closed and contains $\Lambda_x(\mathcal{I})$, see e.g. \cite[Section 5]{MR2148648}. 
Then, it is shown that: 
\begin{enumerate}[label=(\roman*)]
\item \label{itema1} $\Lambda_x(\mathcal{I})$ is an $F_\sigma$-set, provided that $\mathcal{I}$ is an analytic P-ideal (Theorem \ref{thm:characterizationlimit});
\item \label{itema2} $\Lambda_x(\mathcal{I})$ is closed, provided that $\mathcal{I}$ is an $F_\sigma$-ideal (Theorem \ref{thm:fsigmaclosed});
\item \label{itema3} $\Lambda_x(\mathcal{I})$ is closed for all $x$ if and only if $\Lambda_x(\mathcal{I})=\Gamma_x(\mathcal{I})$ for all $x$ if and only if $\mathcal{I}$ is an $F_\sigma$-ideal, provided that $\mathcal{I}$ is an analytic P-ideal (Theorem \ref{thm:charac});
\item \label{itema4} For every $F_\sigma$-set $A$, there exists a sequence $x$ such that $\Lambda_x(\mathcal{I})=A$, provided that $\mathcal{I}$ is an analytic P-ideal which is not $F_\sigma$ (Theorem \ref{thm:fsigmacorrected});
\item \label{itema5} Each of isolated point $\mathcal{I}$-cluster point is also an $\mathcal{I}$-limit point (Theorem \ref{thm:noisolated}).
\end{enumerate}
In addition, we provide in Section \ref{sec:converse} some joint converse results: 
\begin{enumerate}[label=(\roman*)]
\setcounter{enumi}{5}
\item \label{itema6} Given $A\subseteq B \subseteq C\subseteq\mathbf{R}$ where $A$ is an $F_\sigma$-set, $B$ is non-empty regular closed, and $C$ is closed, then there exists a real sequence $x$ such that $\Lambda_x=A$, $\Gamma_x=B$, and $\clusterfin=C$ (Theorem \ref{thm:inclusion} and Corollary \ref{cor:polish});
\item \label{itema7} Given non-empty closed sets $B \subseteq C\subseteq\mathbf{R}$, there exists a real sequence $x$ such that $\Lambda_x(\mathcal{I})=\Gamma_x(\mathcal{I})=B$ and $\clusterfin=C$, provided $\mathcal{I}$ is an $F_\sigma$-ideal different from $\mathrm{Fin}$ (Theorem \ref{thm:conversesigma}).
\end{enumerate}
Lastly, it is shown in Section \ref{sec:rmks} that:
\begin{enumerate}[label=(\roman*)]
\setcounter{enumi}{7}
\item \label{itema8} $\Lambda_x(\mathcal{I})$ is analytic, provided that $\mathcal{I}$ is a co-analytic ideal (Proposition \ref{thm:coanalytic});
\item \label{itema9} An ideal $\mathcal{I}$ is maximal if and only if each real sequence $x$ admits at most one $\mathcal{I}$-limit point (Proposition \ref{prop:maximal} and Corollary \ref{cor:maximal}).
\end{enumerate}
We conclude by showing that there exists an ideal $\mathcal{I}$ and a real sequence $x$ such that $\Lambda_x(\mathcal{I})$ is not an $F_\sigma$-set (Example \ref{eq:nofsigma}).


\section{Topological structure of $\mathcal{I}$-limit points}\label{sec:topology}

We recall that an ideal $\mathcal{I}$ is said to be a \emph{P-ideal} if it is $\sigma$-directed modulo finite, i.e., for every sequence $(A_n)$ of sets in $\mathcal{I}$ there exists $A \in \mathcal{I}$ such that $A_n\setminus A$ is finite for all $n$; equivalent definitions were given, e.g., in \cite[Proposition 1]{MR2285579}.

By identifying sets of integers with their characteristic function, we equip 
$\mathcal{P}(\mathbf{N})$ with the Cantor-space topology and therefore we can assign the topological complexity to the ideals on $\mathbf{N}$. In particular, an ideal $\mathcal{I}$ is \emph{analytic} if it is a continuous image of a $G_\delta$-subset of the Cantor space. 
Moreover, a map $\varphi: \mathcal{P}(\mathbf{N}) \to [0,\infty]$ is a \emph{lower semicontinuous submeasure} provided that: 
  (i) $\varphi(\emptyset)=0$; 
  (ii) $\varphi(A) \le \varphi(B)$ whenever $A\subseteq B$; 
  (iii) $\varphi(A\cup B) \le \varphi(A)+\varphi(B)$ for all $A,B$; and 
  (iv) $\varphi(A)=\lim_{n}\varphi(A\cap \{1,\ldots,n\})$ for all $A$.

By a classical result of Solecki, an ideal $\mathcal{I}$ is an analytic P-ideal if and only if there exists a lower semicontinuous submeasure $\varphi$ such that 
\begin{equation}\label{eq:analyticPideal}
\mathcal{I}=\mathcal{I}_\varphi:=\{A\subseteq \mathbf{N}: \|A\|_\varphi =0\}
\end{equation}
and $\varphi(\mathbf{N})<\infty$, where $\|A\|_\varphi:=\lim_n \varphi(A\setminus \{1,\ldots,n\})$ for all $A\subseteq \mathbf{N}$, see \cite[Theorem 3.1]{MR1708146}. 
Note, in particular, that for every $n\in \mathbf{N}$ it holds
\begin{equation}\label{eq:invariance}
\|A\|_\varphi=\|A\setminus \{1,\ldots,n\}\|_\varphi.
\end{equation}
Hereafter, unless otherwise stated, an analytic P-ideal will be always denoted by $\mathcal{I}_\varphi$, where $\varphi$ stands for the associated lower semicontinuous submeasure as in \eqref{eq:analyticPideal}. 

Given a sequence $x=(x_n)$ taking values in a first countable space $X$ and an analytic P-ideal $\mathcal{I}_\varphi$, define 
\begin{equation}\label{eq:uell}
\mathfrak{u}(\ell):=\lim_{k \to \infty}\|\{n: x_n \in U_k\}\|_\varphi
\end{equation}
for each $\ell \in X$, where $(U_k)$ is a decreasing local base 
of neighborhoods 
at $\ell$. It is easy to see that the limit in \eqref{eq:uell} exists and its value is independent from the choice of $(U_k)$. 
\begin{lem}\label{lem:uppersemicontinuous}
The map $\mathfrak{u}$ is upper semi-continuous. In particular, the set 
$$
\Lambda_x(\mathcal{I}_\varphi,q):=\{\ell \in X: \mathfrak{u}(\ell)\ge q\}.
$$
is closed for every $q>0$.
\end{lem}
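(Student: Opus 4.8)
The plan is to prove directly that for every real $q$ the sublevel set $\{\ell \in X:\mathfrak{u}(\ell)<q\}$ is open; this is precisely the upper semi-continuity of $\mathfrak{u}$, and it yields at once that its complement $\Lambda_x(\mathcal{I}_\varphi,q)=\{\ell\in X:\mathfrak{u}(\ell)\ge q\}$ is closed (for every $q$, in particular for $q>0$).

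The first step is to isolate a monotonicity property of the seminorm: if $A\subseteq B\subseteq \mathbf{N}$, then $\|A\|_\varphi\le \|B\|_\varphi$. This is immediate from property (ii) of the submeasure $\varphi$, since $A\setminus\{1,\ldots,n\}\subseteq B\setminus\{1,\ldots,n\}$ for each $n$ and the limit defining $\|\cdot\|_\varphi$ respects the inequality. As a consequence, if $(U_k)$ is a decreasing local base at a point $\ell$, then $(\{n:x_n\in U_k\})_k$ is a decreasing sequence of subsets of $\mathbf{N}$, so $k\mapsto \|\{n:x_n\in U_k\}\|_\varphi$ is non-increasing; hence the limit in \eqref{eq:uell} exists and equals $\inf_k \|\{n:x_n\in U_k\}\|_\varphi$.

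Next I would fix $\ell_0\in X$ with $\mathfrak{u}(\ell_0)<q$ and produce an open neighborhood of $\ell_0$ on which $\mathfrak{u}<q$. By first countability one may choose a decreasing local base $(U_k)$ at $\ell_0$ consisting of open sets (replace the members of a countable local base by their interiors and take finite intersections). Since $\mathfrak{u}(\ell_0)=\inf_k\|\{n:x_n\in U_k\}\|_\varphi<q$, pick $k_0$ with $\|\{n:x_n\in U_{k_0}\}\|_\varphi<q$. Then $U_{k_0}$ works: for any $\ell\in U_{k_0}$, the set $U_{k_0}$ is an open neighborhood of $\ell$, so every decreasing local base $(W_j)$ at $\ell$ satisfies $W_j\subseteq U_{k_0}$ for all large $j$, whence $\{n:x_n\in W_j\}\subseteq\{n:x_n\in U_{k_0}\}$; applying the monotonicity of $\|\cdot\|_\varphi$ and letting $j\to\infty$ gives $\mathfrak{u}(\ell)\le \|\{n:x_n\in U_{k_0}\}\|_\varphi<q$. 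Thus $\{\ell:\mathfrak{u}(\ell)<q\}$ is a neighborhood of each of its points, hence open.

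There is no substantial obstacle; the argument is a routine manipulation with monotone local bases. The only point requiring a little care is the interplay between the arbitrary local base used to define the value $\mathfrak{u}(\ell)$ and the open decreasing local bases used above — handled by the independence of $\mathfrak{u}(\ell)$ from the chosen base (already noted after \eqref{eq:uell}) together with the passage to interiors.
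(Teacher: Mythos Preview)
Your proof is correct and follows essentially the same route as the paper: both arguments show that $\{\ell:\mathfrak{u}(\ell)<q\}$ is open by picking a basic open neighborhood $U_{k_0}$ of a given point $\ell_0$ with $\|\{n:x_n\in U_{k_0}\}\|_\varphi<q$ and then using monotonicity of $\|\cdot\|_\varphi$ to bound $\mathfrak{u}(\ell)$ for every $\ell\in U_{k_0}$. Your version is slightly more explicit about ensuring the local base consists of open sets and about the limit in \eqref{eq:uell} being an infimum, but the argument is otherwise the same.
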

\begin{proof}
We need to prove that $\mathscr{U}_y:=\{\ell \in X: \mathfrak{u}(\ell)<y\}$ is open for all $y \in \mathbf{R}$ (hence $\mathscr{U}_\infty$ is open too). Clearly, $\mathscr{U}_y=\emptyset$ if $y\le 0$. Hence, let us suppose hereafter $y>0$ and $\mathscr{U}_y\neq \emptyset$. Fix $\ell \in \mathscr{U}_y$ and let $(U_k)$ be a decreasing local base 
of neighborhoods 
at $\ell$. Then, there exists $k_0 \in \mathbf{N}$ such that 
$
\|\{n: x_n \in U_k\}\|_\varphi<y
$ 
for every $k\ge k_0$. Fix $\ell^\prime \in U_{k_0}$ and let $(V_k)$ be a decreasing local base of neighborhoods at $\ell^\prime$. Fix also $k_1 \in \mathbf{N}$ such that $V_{k_1} \subseteq U_{k_0}$. It follows by the monotonicity of $\varphi$ that
$$
\|\{n: x_n \in V_k\}\|_\varphi\le \|\{n: x_n \in U_{k_0}\}\|_\varphi<y
$$
for every $k \ge k_1$. In particular, $\mathfrak{u}(\ell^\prime)<y$ and, by the arbitrariness of $\ell^\prime$, $U_{k_0} \subseteq \mathscr{U}_y$.
\end{proof}

At this point, we provide a useful characterization of the set $\Lambda_x(\mathcal{I}_\varphi)$ (without using limits of subsequences) and we obtain, as a by-product, that it is an $F_\sigma$-set. 
\begin{thm}\label{thm:characterizationlimit}
Let $x$ be a sequence taking values in a first countable space $X$ and $\mathcal{I}_\varphi$ be an analytic P-ideal. Then
\begin{equation}\label{eq:claimlambda}
\Lambda_x(\mathcal{I}_\varphi)=\{\ell \in X: \mathfrak{u}(\ell)>0\}.
\end{equation}
In particular, $\Lambda_x(\mathcal{I}_\varphi)$ is an $F_\sigma$-set.
\end{thm}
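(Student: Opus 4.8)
The plan is to prove the equality \eqref{eq:claimlambda} by establishing both inclusions, and then to deduce the $F_\sigma$ claim from Lemma \ref{lem:uppersemicontinuous}.

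For the inclusion $\Lambda_x(\mathcal{I}_\varphi)\subseteq\{\ell:\mathfrak{u}(\ell)>0\}$, suppose $\ell\in\Lambda_x(\mathcal{I}_\varphi)$, so there is a subsequence $(x_{n_k})$ converging to $\ell$ with $N:=\{n_k:k\in\mathbf{N}\}\notin\mathcal{I}_\varphi$; by \eqref{eq:analyticPideal} this means $\|N\|_\varphi=:q>0$. Fix a decreasing local base $(U_j)$ at $\ell$. Since $x_{n_k}\to\ell$, for each $j$ all but finitely many terms of the subsequence lie in $U_j$, i.e.\ $N\setminus\{n:x_n\in U_j\}$ is finite. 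Hence, using the finite-invariance \eqref{eq:invariance} and monotonicity of $\|\cdot\|_\varphi$, we get $\|\{n:x_n\in U_j\}\|_\varphi\ge\|N\setminus F_j\|_\varphi=\|N\|_\varphi=q$ for every $j$, where $F_j$ is the relevant finite set. Passing to the limit in $j$ yields $\mathfrak{u}(\ell)\ge q>0$.

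The reverse inclusion is the substantive part: assume $\mathfrak{u}(\ell)=q>0$ and build a subsequence converging to $\ell$ whose index set is not in $\mathcal{I}_\varphi$. Fix a decreasing local base $(U_j)$ at $\ell$. The idea is a diagonal/greedy construction: since $\|\{n:x_n\in U_j\}\|_\varphi\ge\mathfrak{u}(\ell)=q$ for every $j$ (the sequence defining $\mathfrak{u}$ is nonincreasing in $j$, so each term is $\ge$ the limit), for each $j$ the set $A_j:=\{n:x_n\in U_j\}$ has $\|A_j\|_\varphi\ge q$. Using lower semicontinuity of $\varphi$ and \eqref{eq:invariance}, one can choose, recursively, a strictly increasing sequence $n_1<n_2<\cdots$ together with a strictly increasing sequence of thresholds so that $n_k\in A_k$ (ensuring $x_{n_k}\in U_k$, hence convergence to $\ell$) while keeping the "mass" of the selected index set bounded below: concretely, pick a finite subset $E_k\subseteq A_k$, consisting of indices larger than everything chosen so far, with $\varphi(E_k)$ close to $q$ (possible since $\|A_k\|_\varphi\ge q$ means $\varphi(A_k\setminus\{1,\dots,m\})$ stays $\ge q-\varepsilon$ for all $m$, and then lower semicontinuity (iv) lets us capture almost all of that mass on a finite piece), and let $N=\bigcup_k E_k$. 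Then for any $m$, $N\setminus\{1,\dots,m\}$ contains some $E_k$ entirely, so $\varphi(N\setminus\{1,\dots,m\})\ge\varphi(E_k)\ge q-\varepsilon$; letting $m\to\infty$ (and $\varepsilon\to0$ via a suitable choice of a summable error sequence) gives $\|N\|_\varphi\ge q>0$, hence $N\notin\mathcal{I}_\varphi$. Since $N$ picks out a subsequence converging to $\ell$, we conclude $\ell\in\Lambda_x(\mathcal{I}_\varphi)$.

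The main obstacle is precisely this construction in the reverse inclusion: one must simultaneously force the chosen indices to drift into the shrinking neighborhoods $U_k$ (so the subsequence converges) and retain enough $\varphi$-mass on the tail to guarantee $\|N\|_\varphi>0$. The key technical tools are the finite-shift invariance \eqref{eq:invariance} and, crucially, property (iv) of a lower semicontinuous submeasure, which allows the infinite mass $\|A_k\|_\varphi$ to be approximated arbitrarily well by the submeasure of a finite subset of $A_k$ lying beyond any prescribed point. Finally, the $F_\sigma$ assertion is immediate: by \eqref{eq:claimlambda} we may write
\begin{equation*}
\Lambda_x(\mathcal{I}_\varphi)=\{\ell\in X:\mathfrak{u}(\ell)>0\}=\bigcup_{k\in\mathbf{N}}\left\{\ell\in X:\mathfrak{u}(\ell)\ge\tfrac{1}{k}\right\}=\bigcup_{k\in\mathbf{N}}\Lambda_x(\mathcal{I}_\varphi,\tfrac{1}{k}),
\end{equation*}
and each set on the right is closed by Lemma \ref{lem:uppersemicontinuous}, so the union is $F_\sigma$.
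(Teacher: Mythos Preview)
Your proposal is correct and follows essentially the same approach as the paper. The first inclusion and the $F_\sigma$ conclusion are identical; for the reverse inclusion, the paper carries out precisely your greedy block construction, choosing thresholds $\theta_0<\theta_1<\cdots$ with $E_k=\mathcal{A}_k\cap(\theta_{k-1},\theta_k]$ and $\varphi(E_k)\ge \mathfrak{u}(\ell)(1-1/k)$, so that $N=\bigcup_k E_k$ satisfies $\|N\|_\varphi\ge\mathfrak{u}(\ell)$ (note that a vanishing error sequence suffices; summability is not needed).
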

\begin{proof}
Let us suppose that there exists $\ell \in \Lambda_x(\mathcal{I}_\varphi)$ and let $(U_k)$ be a decreasing local base of neighborhoods at $\ell$. Then, there exists $A\subseteq \mathbf{N}$ such that $\lim_{n\to \infty, n \in A} x_{n}=\ell$ and $\|A\|_\varphi>0$. At this point, note that, for each $k \in \mathbf{N}$, the set $\{n\in A: x_n \notin U_k\}$ is finite, hence it follows by \eqref{eq:invariance} that $\mathfrak{u}(\ell) \ge \|A\|_\varphi>0$. 

On the other hand, suppose that there exists $\ell \in X$ such that $\mathfrak{u}(\ell)>0$. Let $(U_k)$ be a decreasing local base of neighborhoods at $\ell$ and define $\mathcal{A}_k:=\{n: x_n \in U_k\}$ for each $k \in \mathbf{N}$; note that $\mathcal{A}_k$ is infinite since $\|\mathcal{A}_k\|_\varphi\downarrow \mathfrak{u}(\ell)>0$ implies $\mathcal{A}_k\notin \mathcal{I}_\varphi$ for all $k$. 
Set for convenience $\theta_0:=0$ and define recursively the increasing sequence of integers $(\theta_k)$ so that $\theta_k$ is the smallest integer greater than both $\theta_{k-1}$ and $\min \mathcal{A}_{k+1}$ such that
$$
\varphi(\mathcal{A}_k \cap (\theta_{k-1},\theta_k]) \ge \mathfrak{u}(\ell)\left(1-\nicefrac{1}{k}\right).
$$
Finally, define 
$
\mathcal{A}:=\bigcup_{k} \left(\mathcal{A}_k \cap (\theta_{k-1},\theta_k]\right).
$ 
Since $\theta_k \ge k$ for all $k$, we obtain
$$
\varphi(\mathcal{A}\setminus \{1,\ldots,n\}) \ge \varphi(\mathcal{A}_{n+1} \cap (\theta_{n},\theta_{n+1}]) > \mathfrak{u}(\ell)\left(1-\nicefrac{1}{n}\right)
$$
for all $n$, hence $\|\mathcal{A}\|_\varphi \ge \mathfrak{u}(\ell) >0$. In addition, we have by construction $\lim_{n\to \infty, n \in \mathcal{A}}x_n=\ell$. Therefore $\ell$ is an $\mathcal{I}_\varphi$-limit point of $x$. To sum up, this proves \eqref{eq:claimlambda}.

Lastly, rewriting \eqref{eq:claimlambda} as $\Lambda_x(\mathcal{I}_\varphi)=\bigcup_{n}\Lambda_x(\mathcal{I}_\varphi,\nicefrac{1}{n})$ and considering that each $\Lambda_x(\mathcal{I}_\varphi,\nicefrac{1}{n})$ is closed by Lemma \ref{lem:uppersemicontinuous}, we conclude that $\Lambda_x(\mathcal{I}_\varphi)$ is an $F_\sigma$-set.
\end{proof}

The fact that $\Lambda_x(\mathcal{I}_\varphi)$ is an $F_\sigma$-set already appeared in \cite[Theorem 2]{MR2923430}, although with a different argument. The first result of this type was given in \cite[Theorem 1.1]{MR1838788} for the case $\mathcal{I}_\varphi=\mathcal{I}_0$ and $X=\mathbf{R}$. Later, it was extended in \cite[Theorem 2.6]{MR2463821} for first countable spaces. However, in the proofs contained in \cite{MR2923430, MR2463821} it is unclear why the constructed subsequence $(x_n: n \in \mathcal{A})$ converges to $\ell$. Lastly, Theorem \ref{thm:characterizationlimit} generalizes, again with a different argument, \cite[Theorem 3.1]{Leo17b} for the case $X$ metrizable.

A stronger result holds in the case that the ideal is $F_\sigma$. We recall that, by a classical result of Mazur, an ideal $\mathcal{I}$ is $F_\sigma$ if and only if there exists a lower semicontinuous submeasure $\varphi$ such that
\begin{equation}\label{eq:fsigma}
\mathcal{I}=\{A\subseteq \mathbf{N}: \varphi(A)<\infty\},
\end{equation}
with $\varphi(\mathbf{N})=\infty$, see \cite[Lemma 1.2]{MR1124539}.

\begin{thm}\label{thm:fsigmaclosed}
Let $x=(x_n)$ be a sequence taking values in a first countable space $X$ and let $\mathcal{I}$ be an $F_\sigma$-ideal. Then $\Lambda_x(\mathcal{I})=\Gamma_x(\mathcal{I})$. In particular, $\Lambda_x(\mathcal{I})$ is closed.
\end{thm}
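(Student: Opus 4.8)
The plan is to prove only the nontrivial inclusion $\Gamma_x(\mathcal{I})\subseteq \Lambda_x(\mathcal{I})$: the reverse inclusion $\Lambda_x(\mathcal{I})\subseteq\Gamma_x(\mathcal{I})$ and the closedness of $\Gamma_x(\mathcal{I})$ have been recalled in Section~\ref{sec:intro}, so both displayed claims follow at once from it. By Mazur's characterization \eqref{eq:fsigma} I fix a lower semicontinuous submeasure $\varphi$ with $\varphi(\mathbf{N})=\infty$ and $\mathcal{I}=\{A\subseteq\mathbf{N}:\varphi(A)<\infty\}$; observe that $\varphi$ is finite on every finite set, since $\mathrm{Fin}\subseteq\mathcal{I}$.

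Now let $\ell\in\Gamma_x(\mathcal{I})$, pick a decreasing local base $(U_k)$ of neighborhoods at $\ell$, and set $\mathcal{A}_k:=\{n:x_n\in U_k\}$. These sets decrease with $k$ and, by definition of $\mathcal{I}$-cluster point, none of them lies in $\mathcal{I}$, i.e.\ $\varphi(\mathcal{A}_k)=\infty$ for all $k$. I would then build recursively a strictly increasing sequence of integers $(\theta_k)$, with $\theta_0:=0$, such that
$$
\varphi\bigl(\mathcal{A}_k\cap(\theta_{k-1},\theta_k]\bigr)\ge k \qquad\text{for every }k\in\mathbf{N}.
$$
This is the step that uses lower semicontinuity: from $\varphi(\mathcal{A}_k)=\infty$, subadditivity and $\varphi(\mathcal{A}_k\cap\{1,\dots,\theta_{k-1}\})<\infty$ one gets $\varphi(\mathcal{A}_k\setminus\{1,\dots,\theta_{k-1}\})=\infty$, and then property (iv) of $\varphi$ applied to this latter set yields arbitrarily large $n$ — in particular a $\theta_k>\theta_{k-1}$ — with $\varphi(\mathcal{A}_k\cap(\theta_{k-1},n])$ as large as we wish. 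Finally define $\mathcal{A}:=\bigcup_k\bigl(\mathcal{A}_k\cap(\theta_{k-1},\theta_k]\bigr)$.

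It remains to check that $\mathcal{A}\notin\mathcal{I}$ and that $x_n\to\ell$ along $\mathcal{A}$. Since the blocks $(\theta_{k-1},\theta_k]$ are pairwise disjoint, $\mathcal{A}\cap(\theta_{k-1},\theta_k]=\mathcal{A}_k\cap(\theta_{k-1},\theta_k]$, so monotonicity of $\varphi$ gives $\varphi(\mathcal{A})\ge\varphi\bigl(\mathcal{A}_k\cap(\theta_{k-1},\theta_k]\bigr)\ge k$ for all $k$, hence $\varphi(\mathcal{A})=\infty$ and in particular $\mathcal{A}$ is infinite. For the convergence, given a neighborhood $U$ of $\ell$ choose $k_0$ with $U_{k_0}\subseteq U$: any $n\in\mathcal{A}$ with $n>\theta_{k_0}$ lies in some block $\mathcal{A}_j\cap(\theta_{j-1},\theta_j]$ with $\theta_j\ge n>\theta_{k_0}$, so $j>k_0$, and therefore $x_n\in U_j\subseteq U_{k_0}\subseteq U$. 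Thus $\ell\in\Lambda_x(\mathcal{I})$, completing the proof.

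The one genuine subtlety — and the step I would treat most carefully — is that a submeasure is only subadditive, so the $\varphi$-masses of the blocks cannot simply be added up; the construction circumvents this by keeping the blocks disjoint, so that already on the initial segment $\{1,\dots,\theta_k\}$ the set $\mathcal{A}$ carries a single block of mass $\ge k$ and one invokes only monotonicity. Everything else is routine bookkeeping with the decreasing local base.
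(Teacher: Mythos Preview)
Your proof is correct and follows essentially the same route as the paper's: fix Mazur's submeasure, build disjoint blocks $\mathcal{A}_k\cap(\theta_{k-1},\theta_k]$ of $\varphi$-mass at least $k$, take their union, and conclude via monotonicity and the nesting of the $U_k$. The only differences are cosmetic (notation $\theta_k$ vs.\ $a_k$) and that you spell out in slightly more detail why the recursion succeeds and why only monotonicity---not additivity---is invoked at the end.
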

\begin{proof}
Since it is known that $\Lambda_x(\mathcal{I})\subseteq \Gamma_x(\mathcal{I})$, the claim is clear if $\Gamma_x(\mathcal{I})=\emptyset$.  Hence, let us suppose hereafter that $\Gamma_x(\mathcal{I})$ is non-empty. Fix $\ell \in \Gamma_x(\mathcal{I})$ and let $(U_k)$ be a decreasing local base of neighborhoods at $\ell$. Letting $\varphi$ be a lower semicontinuous submeasure associated with $\mathcal{I}$ as in \eqref{eq:fsigma} and considering that $\ell$ is an $\mathcal{I}$-cluster point, we have $\varphi(A_k)=\infty$ for all $k \in \mathbf{N}$, where $A_k:=\{n: x_n \in U_k\}$.

Then, set $a_0:=0$ and define an increasing sequence of integers $(a_k)$ which satisfies
$$
\varphi(A_k \cap (a_{k-1},a_k]) \ge k
$$
for all $k$ (note that this is possible since $\varphi(A_k\setminus S)=\infty$ whenever $S$ is finite). At this point, set 
$A:=\bigcup_{k} A_k \cap (a_{k-1},a_k]$. It follows by the monotonocity of $\varphi$ that $\varphi(A)=\infty$, hence $A\notin \mathcal{I}$. Moreover, for each $k \in \mathbf{N}$, we have that $\{n \in A: x_n \notin U_k\}$ is finite: indeed, if $n \in A_j \cap (a_{j-1},a_j]$ for some $j\ge k$, then by construction $x_n \in U_j$, which is contained in $U_k$. Therefore $\lim_{n\to \infty,\, n \in A} x_{n} = \ell$, that is, $\ell \in \Lambda_x(\mathcal{I})$.
\end{proof}

Since summable ideals are $F_\sigma$ P-ideals, see e.g. \cite[Example 1.2.3]{MR1711328}, we obtain the following corollary which was proved in \cite[Theorem 3.4]{Leo17b}:
\begin{cor}\label{corsummable}
Let $x$ be a real sequence and let $\mathcal{I}$ be a summable ideal. Then $\Lambda_x(\mathcal{I})$ is closed. 
\end{cor}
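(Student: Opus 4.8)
The plan is to observe that the statement is an immediate consequence of Theorem~\ref{thm:fsigmaclosed} once we record that every summable ideal is $F_\sigma$.

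First I would recall that a summable ideal is of the form $\mathcal{I}_f := \{A \subseteq \mathbf{N} : \sum_{n \in A} f(n) < \infty\}$, where $f : \mathbf{N} \to [0,\infty)$ satisfies $\sum_n f(n) = \infty$. Setting $\varphi(A) := \sum_{n \in A} f(n)$ for all $A \subseteq \mathbf{N}$, I would check the defining conditions (i)--(iv) of a lower semicontinuous submeasure: (i) and (ii) are clear; (iii) holds because $\varphi(A\cup B) \le \varphi(A) + \varphi(B)$, with equality when $A$ and $B$ are disjoint; and (iv) is exactly the fact that a series with nonnegative terms equals the supremum of its finite partial sums. Since $\sum_n f(n) = \infty$ we get $\varphi(\mathbf{N}) = \infty$, and $\mathcal{I}_f = \{A : \varphi(A) < \infty\}$ by definition, so $\mathcal{I}_f$ has the form \eqref{eq:fsigma}, i.e., it is an $F_\sigma$-ideal. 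Alternatively, one may simply invoke \cite[Example~1.2.3]{MR1711328}, which records precisely this fact.

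Then, applying Theorem~\ref{thm:fsigmaclosed} with the first countable space $X = \mathbf{R}$, I would conclude $\Lambda_x(\mathcal{I}) = \Gamma_x(\mathcal{I})$; and since $\Gamma_x(\mathcal{I})$ is always closed (as recalled in Section~\ref{sec:intro}, see \cite[Section~5]{MR2148648}), $\Lambda_x(\mathcal{I})$ is closed. There is no real obstacle here: the entire content is the elementary verification that a summable ideal satisfies Mazur's criterion, everything else being inherited from the already-established Theorem~\ref{thm:fsigmaclosed}.
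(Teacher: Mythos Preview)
Your proposal is correct and follows exactly the paper's approach: the paper simply notes (citing \cite[Example~1.2.3]{MR1711328}) that summable ideals are $F_\sigma$ P-ideals and deduces the corollary from Theorem~\ref{thm:fsigmaclosed}. Your explicit verification of Mazur's criterion for the submeasure $\varphi(A)=\sum_{n\in A}f(n)$ is a harmless elaboration of what the paper leaves to the cited reference.
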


It turns out that, within the class of analytic P-ideals, the property that the set of $\mathcal{I}$-limit points is always closed characterizes the subclass of $F_\sigma$-ideals:
\begin{thm}\label{thm:charac}
Let $X$ be a first countable space which admits a non-trivial convergent sequence. Let also $\mathcal{I}_\varphi$ be an analytic P-ideal. Then the following are equivalent:
\begin{enumerate}[label=\textup{(}\roman*\textup{)}]
\item \label{item1} $\mathcal{I}_\varphi$ is also an $F_\sigma$-ideal;
\item \label{item1b} $\Lambda_x(\mathcal{I}_\varphi)=\Gamma_x(\mathcal{I}_\varphi)$ for all sequences $x$;
\item \label{item2} $\Lambda_x(\mathcal{I}_\varphi)$ is closed for all sequences $x$;
\item \label{item3} there does not exist a partition $\{A_n: n \in \mathbf{N}\}$ of $\mathbf{N}$ such that $\|A_n\|_\varphi>0$ for all $n$ and $\lim_n\|\bigcup_{k>n}A_k\|_\varphi=0$.
\end{enumerate}
\end{thm}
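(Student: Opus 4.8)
The plan is to prove the cycle of implications $\ref{item1}\Rightarrow\ref{item1b}\Rightarrow\ref{item2}\Rightarrow\ref{item3}\Rightarrow\ref{item1}$. The implication $\ref{item1}\Rightarrow\ref{item1b}$ is immediate from Theorem \ref{thm:fsigmaclosed}, and $\ref{item1b}\Rightarrow\ref{item2}$ follows because $\Gamma_x(\mathcal{I}_\varphi)$ is always closed. So the substance lies in the last two implications, which I expect to argue contrapositively.

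For $\ref{item2}\Rightarrow\ref{item3}$ I would argue the contrapositive: assume a partition $\{A_n: n \in \mathbf{N}\}$ of $\mathbf{N}$ with $\|A_n\|_\varphi>0$ for all $n$ and $\lim_n\|\bigcup_{k>n}A_k\|_\varphi=0$, and build a sequence $x$ whose set of $\mathcal{I}_\varphi$-limit points is not closed. Using the hypothesis that $X$ admits a non-trivial convergent sequence, fix distinct points $z_1, z_2, \ldots$ converging to some $z_\infty \notin \{z_n: n\}$ (first countability and Hausdorffness let me assume the $z_n$ are pairwise distinct and distinct from $z_\infty$). Define $x_j := z_n$ whenever $j \in A_n$. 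Then for each fixed $n$, since $\|A_n\|_\varphi>0$ we have $\mathfrak{u}(z_n)\ge\|A_n\|_\varphi>0$, so by Theorem \ref{thm:characterizationlimit} each $z_n \in \Lambda_x(\mathcal{I}_\varphi)$; I would also check that no other points except possibly $z_\infty$ can be $\mathcal{I}_\varphi$-limit points (any neighborhood of a point $\ell \notin \{z_n\}\cup\{z_\infty\}$ eventually misses all $z_n$, and a neighborhood of a fixed $z_n$ excluding the others meets $x$ only on $A_n$). Finally I must show $z_\infty \notin \Lambda_x(\mathcal{I}_\varphi)$, i.e. $\mathfrak{u}(z_\infty)=0$: given a decreasing local base $(U_k)$ at $z_\infty$, each $U_k$ contains $z_n$ for all large $n$ but only finitely many $z_n$, say all $n > m_k$ with $m_k \to \infty$, so $\{j: x_j \in U_k\} \subseteq (\text{finite set}) \cup \bigcup_{n>m_k}A_n$, giving $\|\{j: x_j \in U_k\}\|_\varphi \le \|\bigcup_{n>m_k}A_n\|_\varphi \to 0$ by hypothesis. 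Hence $z_\infty$ is a cluster point of $\{z_n: n\} \subseteq \Lambda_x(\mathcal{I}_\varphi)$ not in $\Lambda_x(\mathcal{I}_\varphi)$, so the latter is not closed.

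For $\ref{item3}\Rightarrow\ref{item1}$ I would again use the contrapositive: assume $\mathcal{I}_\varphi$ is not $F_\sigma$ and produce the forbidden partition. Here I expect to invoke the structure theory of analytic P-ideals versus $F_\sigma$-ideals. By Solecki's theorem $\mathcal{I}_\varphi=\{A: \|A\|_\varphi=0\}$ with $\varphi(\mathbf{N})<\infty$; by Mazur's characterization \eqref{eq:fsigma}, if additionally $\mathcal{I}_\varphi$ were $F_\sigma$ it would equal $\{A: \psi(A)<\infty\}$ for some lsc submeasure $\psi$ with $\psi(\mathbf{N})=\infty$. The key fact (essentially because for an analytic P-ideal $\mathcal{I}_\varphi$ being $F_\sigma$ is equivalent to $\|\mathbf{N}\|_\varphi$ "not being approximated from within by a summable-type tail", or more concretely because a non-$F_\sigma$ analytic P-ideal contains, for each $\varepsilon>0$, sets not in the ideal whose $\varphi$-value is below $\varepsilon$ on arbitrarily late tails) is that one can recursively choose pairwise disjoint finite-support sets: having chosen $A_1,\dots,A_n$ and $m_n$ so that all are contained in $\{1,\dots,m_n\}$, the failure of the $F_\sigma$ property lets me pick a finite set $A_{n+1}\subseteq (m_n,\infty)$ with $\varphi(A_{n+1})\ge c$ for a fixed constant $c>0$ yet chosen far enough out that the cumulative tails shrink; after a standard bookkeeping one arranges $\bigcup A_n = \mathbf{N}$, $\|A_n\|_\varphi>0$ (in fact each $A_n$ can be taken with $\|A_n\|_\varphi$ bounded below, or one splits each into infinitely many pieces each with positive norm), and $\|\bigcup_{k>n}A_k\|_\varphi\to 0$. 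I would extract the precise mechanism from the equivalence "$\mathcal{I}_\varphi$ is $F_\sigma$ iff $\sup\{\|A\|_\varphi: A \in \mathcal{I}_\varphi^{+}\text{ with }\varphi(A)<\infty\}$ behaves appropriately" — in practice one shows: $\mathcal{I}_\varphi$ is not $F_\sigma$ $\Rightarrow$ for every $\varepsilon>0$ and every $m$ there is a finite $F\subseteq(m,\infty)$ with $\varphi(F)>1$ but $\|\bigcup\text{(later such }F\text{'s)}\|_\varphi$ controllable — and this is exactly what powers the recursion.

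The main obstacle is the implication $\ref{item3}\Rightarrow\ref{item1}$, specifically extracting from "not $F_\sigma$" the quantitative statement that permits the recursive construction of the partition with simultaneously $\|A_n\|_\varphi>0$ for every $n$ \emph{and} $\|\bigcup_{k>n}A_k\|_\varphi\to 0$; the tension is that the first condition pushes mass into every block while the second forces the total tail mass to vanish, and reconciling them requires carefully using that for an analytic P-ideal that is not $F_\sigma$ the submeasure $\varphi$ assigns, arbitrarily far out, sets of large $\varphi$-value but small $\|\cdot\|_\varphi$-value. Once this quantitative lemma is in hand the bookkeeping (ensuring the blocks exhaust $\mathbf{N}$ and interleaving them so tails decay) is routine. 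The other three implications are either immediate or follow the transparent counterexample construction sketched above.
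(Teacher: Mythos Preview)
Your treatment of \ref{item1}$\Rightarrow$\ref{item1b}$\Rightarrow$\ref{item2}$\Rightarrow$\ref{item3} matches the paper's approach. (One expository glitch: in \ref{item2}$\Rightarrow$\ref{item3} you write that ``each $U_k$ contains $z_n$ for all large $n$ but only finitely many $z_n$,'' which is self-contradictory. What you need---and what the paper does---is to use Hausdorffness to \emph{choose} the local base so that $U_k$ excludes $z_1,\dots,z_k$; then $\{j:x_j\in U_k\}\subseteq\bigcup_{n>k}A_n$ and the rest of your argument goes through.)

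The genuine gap is in \ref{item3}$\Rightarrow$\ref{item1}. You have the key structural fact backwards: you write that non-$F_\sigma$ gives ``sets of large $\varphi$-value but small $\|\cdot\|_\varphi$-value,'' and your recursion picks \emph{finite} sets $A_{n+1}$ with $\varphi(A_{n+1})\ge c$. But finite sets have $\|\cdot\|_\varphi=0$ by \eqref{eq:invariance}, so no amount of bookkeeping can turn that construction into a partition with $\|A_n\|_\varphi>0$ for every $n$. The correct fact (from Solecki's analysis, which the paper cites) is the opposite: an analytic P-ideal $\mathcal{I}_\varphi$ fails to be $F_\sigma$ exactly when for every $\varepsilon>0$ there exists $M\subseteq\mathbf{N}$ with $0<\|M\|_\varphi\le\varphi(M)<\varepsilon$---that is, sets not in the ideal whose \emph{total} $\varphi$-mass is arbitrarily small. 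From this one recursively builds $(M_n)$ with $\|M_n\|_\varphi>\sum_{k>n}\varphi(M_k)$ and $\sum_n\varphi(M_n)<\varphi(\mathbf{N})$, and then sets $M_0:=\mathbf{N}$ and $A_n:=M_{n-1}\setminus\bigcup_{k\ge n}M_k$. Subadditivity gives $\|A_n\|_\varphi\ge\|M_{n-1}\|_\varphi-\sum_{k\ge n}\varphi(M_k)>0$, while $\|\bigcup_{k>n}A_k\|_\varphi\le\sum_{k\ge n}\varphi(M_k)\to 0$. The tension you correctly identified---positive mass in every block versus vanishing tail mass---is resolved precisely because the $M_n$ have small $\varphi$-value (controlling tails) yet positive $\|\cdot\|_\varphi$-value (feeding each block), not the other way around.
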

\begin{proof}
\ref{item1} $\implies$ \ref{item1b} follows by Theorem \ref{thm:fsigmaclosed} and \ref{item1b} $\implies$ \ref{item2} is clear.

\bigskip

\ref{item2} $\implies$ \ref{item3} By hypothesis, there exists a sequence $(\ell_n)$ converging to $\ell \in X$ such that $\ell_n \neq \ell$ for all $n$. Let us suppose that there exists a partition $\{A_n: n \in \mathbf{N}\}$ of $\mathbf{N}$ such that $\|A_n\|_\varphi>0$ for all $n$ and $\lim_k\|\bigcup_{n\ge k}A_n\|_\varphi=0$. Then, define the sequence $x=(x_n)$ by $x_n=\ell_i$ for all $n \in A_i$. Then, we have that $\{\ell_n: n \in \mathbf{N}\}\subseteq \Lambda_x(\mathcal{I}_\varphi)$. On the other hand, since $X$ is first countable Hausdorff, it follows that for all $k \in \mathbf{N}$ there exists a neighborhood $U_k$ of $\ell$ such that 
$$
\textstyle \{n: x_n \in U_k\} \subseteq \{n: x_n=\ell_i \text{ for some }i\ge k\}=\bigcup_{n\ge k}A_n.
$$
Hence, by the monotonicity of $\varphi$, we obtain $0<\|\{n:x_n \in U_k\}\|_\varphi \downarrow 0$, i.e., $\mathfrak{u}(\ell)=0$, which implies, thanks to Theorem \ref{thm:characterizationlimit}, that $\ell \notin \Lambda_x(\mathcal{I}_\varphi)$. In particular, $\mathcal{I}_\varphi$ is not closed.

\bigskip

\ref{item3} $\implies$ \ref{item1} Lastly, assume that the ideal $\mathcal{I}_\varphi$ is not an $F_\sigma$-ideal. 
According to the proof of \cite[Theorem 3.4]{MR1708146}, cf. also \cite[pp. 342--343]{MR1416872}, this is equivalent to the existence, for each given $\varepsilon>0$, of some set $M\subseteq \mathbf{N}$ such that $0<\|M\|_\varphi \le \varphi(M)<\varepsilon$. This allows to define recursively a sequence of sets $(M_n)$ such that
\begin{equation}\label{eq:setrecursion}
\|M_n\|_\varphi>\sum_{k\ge n+1} \varphi(M_k)>0.
\end{equation}
for all $n$ and, in addition, $\sum_k \varphi(M_k)<\varphi(\mathbf{N})$. Then, it is claimed that there exists a partition $\{A_n: n \in \mathbf{N}\}$ of $\mathbf{N}$ such that $\|A_n\|_\varphi>0$ for all $n$ and $\lim_n\|\bigcup_{k> n}A_k\|_\varphi=0$. To this aim, set $M_0:=\mathbf{N}$ and define 
$
A_n:=M_{n-1}\setminus \bigcup_{k\ge n} M_k
$ 
for all $n\in \mathbf{N}$. It follows by the subadditivity and monotonicity of $\varphi$ that 
$$
\textstyle \varphi(M_{n-1} \setminus \{1,\ldots,k\}) \le \varphi(A_n \setminus \{1,\ldots,k\})+\varphi\left(\bigcup_{k\ge n}M_k\right)
$$
for all $n,k \in \mathbf{N}$; hence, by the lower semicontinuity of $\varphi$ and \eqref{eq:setrecursion}, 
$$
\textstyle \|A_n\|_\varphi \ge \|M_{n-1}\|_\varphi-\varphi\left(\bigcup_{k\ge n}M_k\right)\ge \|M_{n-1}\|_\varphi-\sum_{k\ge n}\varphi(M_k)>0
$$
for all $n \in \mathbf{N}$. Finally, again by the lower semicontinuity of $\varphi$, we get
$$
\textstyle \|\bigcup_{k> n}A_k\|_\varphi =\|\bigcup_{k\ge n}M_{k}\|_\varphi \le \varphi\left(\bigcup_{k\ge n}M_{k}\right) \le \sum_{k\ge n}\varphi(M_{k}) 
$$
which goes to $0$ as $n\to \infty$. This concludes the proof.
\end{proof}

At this point, thanks to Theorem \ref{thm:characterizationlimit} and Theorem \ref{thm:charac}, observe that, if $X$ is a first countable space which admits a non-trivial convergent sequence and $\mathcal{I}_\varphi$ is an analytic P-ideal which is not $F_\sigma$, then there exists a sequence $x$ such that $\Lambda_x(\mathcal{I}_\varphi)$ is a non-closed $F_\sigma$-set. In this case, indeed, all the $F_\sigma$-sets can be obtained:
\begin{thm}\label{thm:fsigmacorrected}
Let $X$ be a first countable space where all closed sets are separable and assume that there exists a non-trivial convergent sequence. Fix also an analytic P-ideal $\mathcal{I}_\varphi$ which is not $F_\sigma$ and let $B\subseteq X$ be a non-empty $F_\sigma$-set. Then, there exists a sequence $x$ such that $\Lambda_x(\mathcal{I}_\varphi)=B$.
\end{thm}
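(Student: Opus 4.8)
The plan is to reduce everything to Theorem~\ref{thm:characterizationlimit}: it suffices to produce a sequence $x$ for which $\{\ell\in X:\mathfrak{u}(\ell)>0\}=B$, where $\mathfrak{u}$ is associated to $x$ as in \eqref{eq:uell}; the lower inclusion will come from the upper semicontinuity of $\mathfrak{u}$ (Lemma~\ref{lem:uppersemicontinuous}), since if $\mathfrak{u}\ge\delta$ on a set dense in a closed set $F$ then, $\{\mathfrak{u}\ge\delta\}$ being closed, $\mathfrak{u}\ge\delta$ on all of $F$. So I would first write $B=\bigcup_m F_m$ with $(F_m)$ increasing and each $F_m$ closed; discarding empty terms and reindexing, assume $F_1\neq\emptyset$. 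Each $F_m$ is separable by hypothesis, so pick countable sets $D_1\subseteq D_2\subseteq\cdots$ with $D_m$ dense in $F_m$, put $D:=\bigcup_m D_m$, let $E_m:=D_m\setminus D_{m-1}$ (with $D_0:=\emptyset$), and for $d\in D$ let $m(d)$ be the least $m$ with $d\in D_m$; reindexing once more, assume every $E_m$ is non-empty.

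The crucial step is a combinatorial lemma about the ideal, which I would isolate: since $\mathcal{I}_\varphi$ is not $F_\sigma$, there exist a partition $\{Y_m:m\in\mathbf{N}\}$ of $\mathbf{N}$ and reals $\delta_1\ge\delta_2\ge\cdots>0$ such that $\lim_{M}\|\bigcup_{m>M}Y_m\|_\varphi=0$ and, for each $m$, $Y_m$ can be partitioned into $|E_m|$ sets of $\|\cdot\|_\varphi$-norm at least $\delta_m$ (so when $E_m$ is infinite, $Y_m$ carries infinitely many pairwise disjoint sets of norm $\ge\delta_m$). Granting this, I would define $x$ by placing, on the piece of $Y_m$ assigned to a point $d\in E_m$, the constant value $d$; since the $Y_m$ cover $\mathbf{N}$, the sequence takes all its values in $D$, the set $\{n:x_n=d\}$ is exactly the piece assigned to $d$, and hence $\|\{n:x_n=d\}\|_\varphi\ge\delta_{m(d)}$ for every $d\in D$.

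Then I would verify that $\{\mathfrak{u}>0\}=B$. For $\supseteq$: for $m\in\mathbf{N}$ and $d\in D_m$ one has $m(d)\le m$, so $\mathfrak{u}(d)\ge\|\{n:x_n=d\}\|_\varphi\ge\delta_{m(d)}\ge\delta_m$; thus $D_m\subseteq\{\mathfrak{u}\ge\delta_m\}$, and since this set is closed by Lemma~\ref{lem:uppersemicontinuous} it contains $\overline{D_m}=F_m$, whence $B\subseteq\{\mathfrak{u}>0\}$. For $\subseteq$: fix $\ell\notin B$. If $\ell\notin\overline{D}$ then some neighbourhood of $\ell$ meets no $x_n$, so $\mathfrak{u}(\ell)=0$; otherwise, since each $F_m=\overline{D_m}$ is closed and $\ell\notin F_m$, for every $M$ there is a neighbourhood $W_M\ni\ell$ with $W_M\cap D_M=\emptyset$, hence $\{n:x_n\in W_M\}\subseteq\{n:x_n\in D\setminus D_M\}=\bigcup_{m>M}Y_m$, so $\mathfrak{u}(\ell)\le\|\bigcup_{m>M}Y_m\|_\varphi\to 0$ and $\mathfrak{u}(\ell)=0$. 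By Theorem~\ref{thm:characterizationlimit} this gives $\Lambda_x(\mathcal{I}_\varphi)=\{\mathfrak{u}>0\}=B$.

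The hard part is the combinatorial lemma, and this is where non-$F_\sigma$-ness genuinely enters. By Theorem~\ref{thm:charac} the negation of condition \ref{item3} holds, so there is a partition $\{A_n\}$ of $\mathbf{N}$ with $\|A_n\|_\varphi>0$ and $\|\bigcup_{k>n}A_k\|_\varphi\to 0$; moreover, as recalled in the proof of Theorem~\ref{thm:charac}, for every $\varepsilon>0$ there is a set of positive $\|\cdot\|_\varphi$-norm whose $\varphi$-value is below $\varepsilon$. The idea is to distribute the mass of far-out pieces of the $A_n$ among the levels, giving level $m$ a ``fraction'' $\delta_m$ that is small for large $m$ (so that $\bigcup_{m>M}Y_m$ has vanishing norm) yet spread along infinitely many pieces, so that $Y_m$ still splits into infinitely many disjoint sets of norm $\ge\delta_m$ — precisely the pathological splitting available for non-$F_\sigma$, but impossible for $F_\sigma$, analytic P-ideals (indeed Theorem~\ref{thm:charac} shows the latter cannot realise non-closed $F_\sigma$ sets). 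When $B$ is countable this difficulty disappears: taking the enumeration $B=\{p_1,p_2,\dots\}$, the choice $F_m=\{p_1,\dots,p_m\}$, $Y_m=A_m$ is a legitimate instance of the above construction and already yields $\Lambda_x(\mathcal{I}_\varphi)=B$, so the real work is confined to the uncountable case.
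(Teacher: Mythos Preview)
Your overall strategy is the same as the paper's, and the verification that $\{\mathfrak{u}>0\}=B$ is correct (indeed, your use of Lemma~\ref{lem:uppersemicontinuous} for the inclusion $B\subseteq\Lambda_x(\mathcal{I}_\varphi)$ is slightly cleaner than the paper, which builds explicit convergent subsequences instead). The only genuine gap is the combinatorial lemma, which you flag as ``the hard part'' but leave unproved; your talk of redistributing ``far-out pieces of the $A_n$'' among levels is vaguer and more complicated than what is actually needed.

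Here is the paper's simple resolution. Take $Y_m:=A_m$ directly from the Theorem~\ref{thm:charac} partition; the point is that \emph{any} set $A$ with $\|A\|_\varphi>0$ can be split into infinitely many pairwise disjoint subsets each of $\|\cdot\|_\varphi$-norm at least $\tfrac{1}{2}\|A\|_\varphi$. Indeed, by lower semicontinuity $\varphi(A\cap(N,m])\uparrow\varphi(A\setminus\{1,\dots,N\})\ge\|A\|_\varphi$ as $m\to\infty$, so one may choose $0=\theta_0<\theta_1<\cdots$ with $\varphi(A\cap(\theta_{n-1},\theta_n])\ge\tfrac{1}{2}\|A\|_\varphi$ for every $n$; grouping these consecutive blocks along any partition of $\mathbf{N}$ into infinite sets produces infinitely many subsets of $A$, each containing infinitely many blocks and hence each of $\|\cdot\|_\varphi$-norm at least $\tfrac{1}{2}\|A\|_\varphi$. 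Setting $\delta_m:=\tfrac{1}{2}\min_{j\le m}\|A_j\|_\varphi$ to secure monotonicity, your lemma holds and the rest of your argument goes through verbatim. No separate treatment of the countable case is needed.
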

\begin{proof}
Let $(B_k)$ be a sequence of non-empty closed sets such that $\bigcup_k B_k=B$. Let also $\{b_{k,n}: n \in \mathbf{N}\}$ be a countable dense subset of $B_k$. Thanks to Theorem \ref{thm:charac}, there exists a partition $\{A_n: n \in \mathbf{N}\}$ of $\mathbf{N}$ such that $\|A_n\|_\varphi>0$ for all $n$ and $\lim_n \|\bigcup_{k>n}A_k\|_\varphi=0$. Moreover, for each $k \in \mathbf{N}$, set $\theta_{k,0}:=0$ and it is easily seen that there exists an increasing sequence of positive integers $(\theta_{k,n})$ such that 
$$
\varphi(A_k \cap (\theta_{k,n-1},\theta_{k,n}]) \ge \frac{1}{2}\|A_k \setminus \{1,\ldots,\theta_{k,n-1}\}\|_\varphi=\frac{1}{2}\|A_k\|_\varphi
$$
for all $n$. Hence, setting $A_{k,n}:=A_k \cap \bigcup_{m \in A_n}(\theta_{k,m-1},\theta_{k,m}]$, we obtain that $\{A_{k,n}: n\in \mathbf{N}\}$ is a partition of $A_k$ such that $\frac{1}{2}\|A_k\|_\varphi \le \|A_{k,n}\|_\varphi \le \|A_k\|_\varphi$ for all $n,k$.

At this point, let $x=(x_n)$ defined by $x_n=b_{k,m}$ whenever $n \in A_{k,m}$. Fix $\ell \in B$, then there exists $k \in \mathbf{N}$ such that $\ell \in B_k$. Let $(b_{k,r_m})$ be a sequence in $B_k$ converging to $\ell$. Thus, set $\tau_0:=0$ and let $(\tau_m)$ be an increasing sequence of positive integers such that $\varphi(A_{k,r_m} \cap (\tau_{m-1},\tau_m]) \ge \frac{1}{2}\|A_{k,r_m}\|_\varphi$ for each $m$. Setting $A:=\bigcup_m A_{k,r_m} \cap (\tau_{m-1},\tau_m]$, it follows by construction that $\lim_{n\to \infty, n \in A}x_n=\ell$ and $\|A\|_\varphi \ge \frac{1}{4}\|A_k\|_\varphi>0$. This shows that $B\subseteq \Lambda_x(\mathcal{I}_\varphi)$.

To complete the proof, fix $\ell \notin B$ and let us suppose for the sake of contradiction that there exists $A\subseteq \mathbf{N}$ such that $\lim_{n\to \infty, n \in A}x_n=\ell$ and $\|A\|_\varphi>0$. For each $m \in \mathbf{N}$, let $U_m$ be an open neighborhood of $\ell$ which is disjoint from the closed set $B_1 \cup \cdots B_m$. It follows by the subadditivity and the monotonicity of $\varphi$ that there exists a finite set $Y$ such that
$$
\textstyle \|A\|_\varphi \le \|Y\|_\varphi + \|\{n \in A: x_n \notin B_1 \cup \cdots \cup B_m\}\|_\varphi \le \|\bigcup_{k>m}A_k\|_\varphi.
$$
The claim follows by the arbitrariness of $m$ and the fact that $\lim_m \|\bigcup_{k>m}A_k\|_\varphi=0$.
\end{proof}

Note that every analytic P-ideal without the Bolzano-Weierstrass property cannot be $F_\sigma$, see \cite[Theorem 4.2]{MR2320288}. Hence Theorem \ref{thm:fsigmacorrected} applies to this class of ideals.


It was shown in \cite[Theorem 2.8 and Theorem 2.10]{MR2463821} that if $X$ is a topological space where all closed sets are separable, then for each $F_\sigma$-set $A$ and closed set $B$ there exist sequences $a=(a_n)$ and $b=(b_n)$ with values in $X$ such that 
$\Lambda_a=A$ and $\Gamma_b=B$.

As an application of Theorem \ref{thm:characterizationlimit}, we prove 
that, in general, its stronger version with $a=b$ fails (e.g., there are no real sequences $x$ such that $\Lambda_x=\{0\}$ and $\Gamma_x=\{0,1\}$). 

Here, a topological space $X$ is said to be \emph{locally compact} if for every $x \in X$ there exists a neighborhood $U$ of $x$ such that its closure $\overline{U}$ is compact, cf. \cite[Section 3.3]{MR1039321}.
\begin{thm}\label{thm:noisolated}
Let $x=(x_n)$ be a sequence taking values in a locally compact first countable space and fix an analytic P-ideal $\mathcal{I}_\varphi$. Then each isolated $\mathcal{I}_\varphi$-cluster point is also an $\mathcal{I}_\varphi$-limit point.
\end{thm}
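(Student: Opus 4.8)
The plan is to fix an isolated $\mathcal{I}_\varphi$-cluster point $\ell$ of $x$ and produce a subsequence along a set $A \notin \mathcal{I}_\varphi$ converging to $\ell$; by Theorem \ref{thm:characterizationlimit} it suffices to show $\mathfrak{u}(\ell) > 0$. Since $\ell$ is isolated in $\Gamma_x(\mathcal{I}_\varphi)$, we may first choose a neighborhood $W$ of $\ell$ whose closure $\overline{W}$ is compact (using local compactness) and which is small enough that $\overline{W} \cap \Gamma_x(\mathcal{I}_\varphi) = \{\ell\}$; shrinking $W$ along a decreasing local base, we get a decreasing local base $(U_k)$ at $\ell$ with each $\overline{U_k} \subseteq W$ compact and $\overline{U_k} \cap \Gamma_x(\mathcal{I}_\varphi) = \{\ell\}$. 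Because $\ell$ is an $\mathcal{I}_\varphi$-cluster point, $\|\{n : x_n \in U_k\}\|_\varphi > 0$ for every $k$, so it remains to show that this sequence of values does not decrease to $0$, i.e. $\mathfrak{u}(\ell) > 0$.

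The key step is to control the ``annuli'' $E_k := \{n : x_n \in U_k \setminus U_{k+1}\}$. Suppose toward a contradiction that $\mathfrak{u}(\ell) = 0$, i.e. $\|\{n : x_n \in U_k\}\|_\varphi \to 0$. I would like to extract from the terms landing in $W$ a subsequence witnessing a cluster point in $\overline{W}$ other than $\ell$, contradicting $\overline{W} \cap \Gamma_x(\mathcal{I}_\varphi) = \{\ell\}$. Concretely: the set $D := \{n : x_n \in W\}$ satisfies $\|D\|_\varphi > 0$ (it contains $\{n : x_n \in U_0\}$ for the first base element inside $W$), while $\|\{n : x_n \in U_k\}\|_\varphi \to 0$ forces $\|\{n \in D : x_n \notin U_k\}\|_\varphi = \|D\|_\varphi > 0$ for all large $k$, by subadditivity of the seminorm $\|\cdot\|_\varphi$. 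Thus the set $D' := \{n \in D : x_n \notin \bigcap_k U_k\} = \{n \in D : x_n \neq \ell\}$ (if $\ell$ is the only point in all $U_k$) has $\|D'\|_\varphi = \|D\|_\varphi > 0$; more carefully, there is some $k^*$ with $\|\{n \in D : x_n \in \overline{W}\setminus U_{k^*}\}\|_\varphi > 0$. The points $x_n$ for $n$ in that set lie in the compact set $K := \overline{W}\setminus U_{k^*}$.

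Now I invoke the standard fact that a sequence restricted to an index set not in $\mathcal{I}_\varphi$, taking values in a compact first countable space, has an $\mathcal{I}_\varphi$-cluster point inside that compact set — indeed, if $(x_n : n \in S)$ with $\|S\|_\varphi>0$ had no $\mathcal{I}_\varphi$-cluster point in the compact set $K$, then each point of $K$ would have a neighborhood $V$ with $\|\{n \in S : x_n \in V\}\|_\varphi = 0$; extracting a finite subcover $V_1,\dots,V_m$ of $K$ and using subadditivity gives $\|S\|_\varphi = \|\{n \in S : x_n \in K\}\|_\varphi \le \sum_i \|\{n \in S : x_n \in V_i\}\|_\varphi = 0$, a contradiction. (Here I use that $\|\cdot\|_\varphi$ vanishes on sets in $\mathcal{I}_\varphi$, is monotone, and subadditive, all immediate from the properties of $\varphi$ and the definition of $\|\cdot\|_\varphi$.) Applying this with $S = \{n \in D : x_n \in K\}$ and $K = \overline{W}\setminus U_{k^*}$ yields an $\mathcal{I}_\varphi$-cluster point of $x$ lying in $K \subseteq \overline{W}$, hence equal to $\ell$ by our choice of $W$; but $\ell \notin K = \overline{W}\setminus U_{k^*}$ since $\ell \in U_{k^*}$. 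This contradiction shows $\mathfrak{u}(\ell) > 0$, and Theorem \ref{thm:characterizationlimit} gives $\ell \in \Lambda_x(\mathcal{I}_\varphi)$.

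The main obstacle is the bookkeeping in the second paragraph: pinning down a single level $k^*$ such that a positive $\|\cdot\|_\varphi$-mass of indices in $D$ have $x_n$ in the compact ``shell'' $\overline{W}\setminus U_{k^*}$, rather than merely escaping every $U_k$ in a way that could still accumulate only at $\ell$. The resolution is exactly the subadditivity estimate $\|D\|_\varphi \le \|\{n \in D : x_n \in U_{k}\}\|_\varphi + \|\{n \in D : x_n \in \overline W \setminus U_k\}\|_\varphi$ together with $\mathfrak{u}(\ell)=0$, which forces the second term to equal $\|D\|_\varphi>0$ for all sufficiently large $k$; any such $k$ serves as $k^*$. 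Local compactness is used only to make the shell compact so that the finite-subcover argument for the existence of an $\mathcal{I}_\varphi$-cluster point applies.
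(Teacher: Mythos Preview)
Your argument is correct and follows essentially the same route as the paper's proof: assume $\mathfrak{u}(\ell)=0$, use subadditivity of $\|\cdot\|_\varphi$ to locate a compact ``shell'' $K=\overline{W}\setminus U_{k^*}$ carrying positive $\|\cdot\|_\varphi$-mass, and then run a finite-subcover argument on $K$ to produce an $\mathcal{I}_\varphi$-cluster point in $K$, contradicting the isolation of $\ell$. The only minor imprecisions are that the second term in your subadditivity estimate \emph{converges to} (rather than equals) $\|D\|_\varphi$, and that arranging $\overline{W}\cap\Gamma_x(\mathcal{I}_\varphi)=\{\ell\}$ (with the closure) tacitly uses the regularity of locally compact Hausdorff spaces---both harmless, and the paper handles them the same way.
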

\begin{proof}
Let us suppose for the sake of contradiction that there exists an isolated $\mathcal{I}_\varphi$-cluster point, let us say $\ell$, which is not an $\mathcal{I}_\varphi$-limit point. Let $(U_k)$ be a decreasing local base of open neighborhoods at $\ell$ 
such that $\overline{U}_1$ is compact. 
Let also $m$ be a sufficiently large integer such that $U_m \cap \Gamma_x(\mathcal{I}_\varphi)=\{\ell\}$. 
Thanks to \cite[Theorem 3.3.1]{MR1039321} the underlying space is, in particular, regular, hence there exists an integer $r>m$ such that $\overline{U}_r$ is a compact contained in $U_m$. 
In addition, since $\ell$ is an $\mathcal{I}_\varphi$-cluster point and it is not an $\mathcal{I}_\varphi$-limit point, it follows by Theorem \ref{thm:characterizationlimit} that
$$
0<\|\{n: x_n \in U_k\}\|_\varphi \downarrow \mathfrak{u}(\ell)=0.
$$ 
In particular, there exists $s \in \mathbf{N}$ such that $0<\|\{n: x_n \in U_s\}\|_\varphi<\|\{n: x_n \in U_{r}\}\|_\varphi$.

Observe that $K:=\overline{U}_{r} \setminus U_s$ is a closed set contained in $\overline{U}_1$, hence it is compact. By construction we have that $K \cap \Gamma_x(\mathcal{I}_\varphi)=\emptyset$. Hence, for each $z \in K$, there exists an open neighborhood $V_z$ of $z$ such that $V_z \subseteq U_m$ and $\{n: x_n \in V_z\} \in \mathcal{I}_\varphi$, i.e., $\|\{n: x_n \in V_z\}\|_\varphi=0$. It follows that $\bigcup_{z \in K} V_z$ is an open cover of $K$ which is contained in $U_m$. Since $K$ is compact, there exists a finite set $\{z_1,\ldots,z_t\} \subseteq K$ for which
\begin{equation}\label{eq:containedfinal}
K \subseteq V_{z_1} \cup \cdots \cup V_{z_t} \subseteq U_m.
\end{equation}

At this point, by the subadditivity of $\varphi$, it easily follows that $\|A\cup B\|_\varphi \le \|A\|_\varphi+\|B\|_\varphi$ for all $A,B\subseteq \mathbf{N}$. 
Hence we have
\begin{displaymath}
\begin{split}
\|\{n: x_n \in K\}\|_\varphi &\ge \|\{n: x_n \in \overline{U}_r\}\|_\varphi- \|\{n: x_n \in U_{s}\}\|_\varphi \\
&\ge \|\{n: x_n \in U_{r}\}\|_\varphi- \|\{n: x_n \in U_{s}\}\|_\varphi >0. 
\end{split}
\end{displaymath}
On the other hand, it follows by \eqref{eq:containedfinal} that
$$
\textstyle \|\{n: x_n \in K\}\|_\varphi \le \|\{n: x_n \in \bigcup_{i=1}^t V_{z_i}\}\|_\varphi \le \sum_{i=1}^t \|\{n: x_n \in V_{z_i}\}\|_\varphi=0. 
$$
This contradiction concludes the proof.
\end{proof}

The following corollary is immediate (we omit details):
\begin{cor}
Let $x$ be a real sequence for which $\Gamma_x$ is a discrete set. Then $\Lambda_x=\Gamma_x$.
\end{cor}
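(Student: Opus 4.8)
The plan is to deduce the corollary directly from Theorem~\ref{thm:noisolated} applied to $X=\mathbf{R}$ and $\mathcal{I}_\varphi=\mathcal{I}_0$, after observing that $\mathbf{R}$ is a locally compact first countable space and that $\mathcal{I}_0$ is an analytic P-ideal (generated by the lower semicontinuous submeasure $\varphi(A)=\sup_n |A\cap\{1,\dots,n\}|/n$). First I would note that if $\Gamma_x$ is discrete, then every point of $\Gamma_x$ is an isolated point of $\Gamma_x$, so by Theorem~\ref{thm:noisolated} every statistical cluster point of $x$ is also a statistical limit point of $x$; that is, $\Gamma_x\subseteq\Lambda_x$. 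Combined with the reverse inclusion $\Lambda_x\subseteq\Gamma_x$ (recalled in the introduction, see \cite[Section 5]{MR2148648}), this gives $\Lambda_x=\Gamma_x$.

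The only subtlety worth spelling out is the word \emph{discrete}: one must read it as ``discrete as a topological subspace of $\mathbf{R}$'', i.e.\ every point of $\Gamma_x$ has a neighborhood in $\mathbf{R}$ meeting $\Gamma_x$ only at that point. Under this reading each point of $\Gamma_x$ is isolated in $\Gamma_x$, which is exactly the hypothesis needed to invoke Theorem~\ref{thm:noisolated}. (If instead $\Gamma_x=\emptyset$ the statement is trivial, since then $\Lambda_x=\emptyset$ as well because $\Lambda_x\subseteq\Gamma_x$.)

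There is essentially no obstacle here: the content is entirely contained in Theorem~\ref{thm:noisolated}, and the corollary is just the observation that a discrete set consists only of isolated points. I would therefore present the proof in one or two sentences, mirroring the style of the ``we omit details'' corollaries elsewhere in the paper: let $\ell\in\Gamma_x$; since $\Gamma_x$ is discrete, $\ell$ is an isolated point of $\Gamma_x$, hence an isolated $\mathcal{I}_0$-cluster point, so by Theorem~\ref{thm:noisolated} it is an $\mathcal{I}_0$-limit point; thus $\Gamma_x\subseteq\Lambda_x$, and together with $\Lambda_x\subseteq\Gamma_x$ we conclude $\Lambda_x=\Gamma_x$.
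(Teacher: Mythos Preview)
Your proposal is correct and is exactly the intended immediate deduction from Theorem~\ref{thm:noisolated} (the paper itself omits the details): since $\mathbf{R}$ is locally compact and first countable and $\mathcal{I}_0$ is an analytic P-ideal, every point of the discrete set $\Gamma_x$ is an isolated $\mathcal{I}_0$-cluster point, hence an $\mathcal{I}_0$-limit point, giving $\Gamma_x\subseteq\Lambda_x$ and therefore equality.
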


\section{Joint Converse results}\label{sec:converse}

We provide now a kind of converse of Theorem \ref{thm:noisolated}, specializing to the case of the ideal $\mathcal{I}_0$: informally, if $B$ is a sufficiently smooth closed set and $A$ is an $F_\sigma$-set containing the isolated points of $B$, then there exists a sequence $x$ such that $\Lambda_x=A$ and $\Gamma_x=B$.

To this aim, we need some additional notation: let $\mathrm{d}^\star$, $\mathrm{d}_\star$, and $\mathrm{d}$ be the upper asymptotic density, lower asymptotic density, and asymptotic density on $\mathbf{N}$, resp.; in particular, $\mathcal{I}_0=\{S\subseteq \mathbf{N}: \mathrm{d}^\star(S)=0\}$.

Given a topological space $X$, the interior and the closure of a subset $S\subseteq X$ are denoted by $S^\circ$ and $\overline{S}$, respectively; $S$ is said to be \emph{regular closed} if $S=\overline{S^\circ}$. We let the Borel $\sigma$-algebra on $X$ be $\mathcal{B}(X)$. A Borel probability measure $\mu: \mathcal{B}(X) \to [0,1]$ is said to be \emph{strictly positive} whenever $\mu(U)>0$ for all non-empty open sets $U$. Moreover, $\mu$ is \emph{atomless} if, for each measurable set $A$ with $\mu(A)>0$, there exists a measurable subset $B\subseteq A$ such that $0<\mu(B)<\mu(A)$. Then, a sequence $(x_n)$ taking values in $X$ is said to be $\mu$\emph{-uniformly distributed} whenever
\begin{equation}\label{eq:muud}
\mu(F) \ge \mathrm{d}^\star(\{n: x_n \in F\})
\end{equation}
for all closed sets $F$, cf. \cite[Section 491B]{MR2462372}.

\begin{thm}\label{thm:inclusion}
Let $X$ be a separable metric space and $\mu: \mathcal{B}(X)\to [0,1]$ be an atomless strictly positive Borel probability measure. Fix also sets $A\subseteq B\subseteq C\subseteq X$ such that $A$ is an $F_\sigma$-set, and $B,C$ are closed sets such that: (i) $\mu(B)>0$, (ii) the set $S$ of isolated points of $B$ is contained in $A$, and (iii) $B\setminus S$ is regular closed. Then there exists a sequence $x$ taking values in $X$ such that
\begin{equation}\label{eq:claim}
\Lambda_x=A,\,\,\Gamma_x=B,\,\text{ and }\,\mathrm{L}_x=C.
\end{equation}
\end{thm}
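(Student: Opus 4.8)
The plan is to build the sequence $x$ in two interleaved ``layers'': a sparse layer (living on a density-zero set of indices) that produces the $F_\sigma$-set $A$ of statistical limit points together with the full ordinary limit set $C$, and a bulk layer (living on a density-one set) that produces $B$ as the set of statistical cluster points while contributing nothing to $\Lambda_x$. Concretely, partition $\mathbf{N}$ into two sets $P$ and $Q$ with $\mathrm{d}(P)=0$ and $\mathrm{d}(Q)=1$, and arrange $P$ itself to be a disjoint union of blocks $P = \bigsqcup_k P_k$, each $P_k$ still of density zero but not contained in $\mathcal{I}_0$ when we only look ``eventually''—more precisely, we want each tail-block to have upper density as large as we need along suitable windows. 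The indices in $Q$ will be used to enumerate a countable dense subset of $B$ with the right frequencies dictated by $\mu$, so that $Q$ realizes a $\mu$-uniformly distributed sequence in $B$; by the standard equidistribution machinery this forces $\Gamma_x \supseteq \overline{B^\circ}$, and with (iii) and $S\subseteq A$ handled separately this will give $\Gamma_x = B$. The indices in $P$ are subdivided: for each point $s\in S$ we reserve one block of positive upper density converging to $s$ (so $S\subseteq \Lambda_x$), and for the remaining $F_\sigma$-structure of $A$ we write $A\setminus S \subseteq \bigcup_j F_j$ with $F_j$ closed, pick countable dense sets in each $F_j$, and distribute them over blocks whose upper density is positive but which can be made to vanish in the tail, exactly as in the proof of Theorem~\ref{thm:fsigmacorrected}.

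First I would fix the combinatorial skeleton: choose a partition $\{D_n : n\in\mathbf{N}\}$ of $\mathbf{N}$ with $\mathrm{d}^\star(D_n)>0$ for every $n$ while $\mathrm{d}^\star(\bigcup_{n\ge k} D_n)\to 0$ (this is the $\mathcal{I}_0$-analogue of the partition produced in Theorem~\ref{thm:charac}, and it exists because $\mathcal{I}_0$ is not $F_\sigma$). Refine each $D_n$ into countably many pieces of still-positive upper density and a leftover density-one remainder; on the density-one remainder put the equidistributed enumeration of $B$, and on the countably many positive-density pieces put, block by block, the dense sequences in the closed pieces $F_j$ of $A$ and the constant sequences $s, s, s, \dots$ for $s\in S$. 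Second, I would verify $\Lambda_x\supseteq A$: for $\ell\in A$ pick a sequence in the relevant $F_j$ (or the constant $s$) tending to $\ell$ and extract, across the corresponding blocks, a subsequence along a set of positive upper density converging to $\ell$—this is the diagonal construction already used in Theorem~\ref{thm:characterizationlimit} and Theorem~\ref{thm:fsigmacorrected}. Third, $\Lambda_x\subseteq A$: if $\ell\notin A$, then $\ell$ is not a limit of any of the dense sets attached to the $F_j$'s nor equal to any $s$, so a small neighborhood $U$ of $\ell$ meets $x_n$ only for $n$ in finitely many of the positive-density blocks plus the equidistributed part near $\ell$; the equidistributed part has $\mathrm{d}^\star\{n\in Q : x_n\in U\}\le \mu(U)$, and since $\mu$ is atomless and strictly positive we can shrink $U$ so this is as small as we like while the finitely-many-blocks contribution sits inside a tail $\bigcup_{n\ge k}D_n$ of vanishing upper density—hence $\mathrm{d}^\star\{n:x_n\in U\}=0$ and $\ell\notin\Lambda_x$.

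Fourth, I would pin down $\Gamma_x$. The inclusion $\Gamma_x\subseteq C$: any cluster point must lie in $\overline{\{x_n\}}$, and all values come from $B\cup(\text{dense subsets of the }F_j)\cup S\subseteq \bar B = B \subseteq C$, so $\Gamma_x\subseteq \bar{\{x_n\}}\subseteq C$; combined with $\clusterfin\subseteq C$ and the fact that every value is used infinitely often along its block, one checks $\clusterfin=C$ because the $F_j$ enumerations are dense in $A$, the $B$-enumeration is dense in $B$, and $A\subseteq B\subseteq C$ with $C$ closed forces the ordinary limit set to be exactly $C$ once we also drop in, on a density-zero set of ``junk'' indices, a sequence dense in $C$. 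The inclusion $B\subseteq \Gamma_x$ is the crux: points of $S$ are in $\Lambda_x\subseteq\Gamma_x$ by Step~2; for $\ell\in B\setminus S = \overline{(B\setminus S)^\circ}$, every neighborhood $U$ of $\ell$ contains a nonempty open subset of $B$, which by $\mu$-uniform distribution and strict positivity of $\mu$ satisfies $\mathrm{d}^\star\{n:x_n\in U\}\ge (\text{something positive})>0$, so $\ell\in\Gamma_x$; and $\Gamma_x\subseteq B$ follows since any $\ell\notin B$ has a neighborhood disjoint from $B$, and $\{n:x_n\in U\}$ then lies in the positive-density junk blocks attached to $A\subseteq B$—wait, those are in $B$—so actually disjointness from $B$ forces $\{n:x_n\in U\}$ to lie in a vanishing tail of the $D_n$'s, giving density zero.

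The main obstacle, and the step I expect to absorb most of the work, is the simultaneous control in Step~3 and the $\Gamma_x\subseteq B$ part of Step~4: I must choose the refinement of the $D_n$'s and the frequencies of the equidistributed block so that (a) the $\mu$-uniformly distributed sequence on the density-one part has $\Gamma = B$ exactly (this needs the atomless, strictly-positive, regular-closed package—essentially a known construction, cf.\ the equidistribution results of Section~491B—but must be merged with the other layers without the merging destroying the densities), and (b) for every $\ell\notin A$ the ``finitely many blocks plus equidistributed tail'' estimate genuinely yields upper density $0$, which requires that the positive-density blocks be arranged in a single infinite array $\{D_{n}^{(i)}\}$ with $\mathrm{d}^\star(\bigcup_{(n,i):n\ge k}D_n^{(i)})\to 0$. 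Making all these density bookkeeping requirements compatible in one explicit interleaving—placing block boundaries, choosing how fast each dense enumeration cycles, and checking the union-tail density genuinely vanishes—is the technical heart; everything else is an assembly of the arguments already given for Theorems~\ref{thm:characterizationlimit}, \ref{thm:fsigmacorrected}, and \ref{thm:noisolated}.
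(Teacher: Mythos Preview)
Your plan coincides with the paper's proof: an explicit three-layer interleaving consisting of a density-zero index set carrying a dense enumeration of $C$, a positive-density index set carrying a $\mu$-equidistributed sequence in $F=B\setminus S$, and a family $\{\mathscr{A}_k\}$ of blocks with $\mathrm{d}^\star(\mathscr{A}_k)>0$ but $\mathrm{d}^\star\bigl(\bigcup_{k\ge m}\mathscr{A}_k\bigr)\to 0$ (each further refined into sub-blocks $\mathscr{A}_{k,m}$ of the same upper density) carrying dense subsets of the closed pieces $A_k$ of $A$; the verifications of $\mathrm{L}_x=C$, $\Gamma_x=B$, and $\Lambda_x=A$ run exactly as you sketch. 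Two small corrections to your write-up: discard the opening $P$/$Q$ picture, since a set $P$ with $\mathrm{d}(P)=0$ cannot contain sub-blocks of positive upper density---your $D_n$-partition in the second paragraph is the correct object and is precisely what the paper uses; and in your $\Gamma_x\subseteq B$ step, the right conclusion of your ``wait'' is that a neighborhood $U$ disjoint from $B$ meets $\{x_n\}$ only on the density-zero $C$-junk indices (since all $A$-block values and all equidistributed values already lie in $B$), not on a tail of the $D_n$'s.
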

\begin{proof}
Set $F:=B\setminus S$ and note that, by the separability of $X$, $S$ at most countable. In particular, $\mu(S)=0$, hence $\mu(F)=\mu(B)>0$.

Let us assume for now that $A$ is non-empty. 
Since $A$ is an $F_\sigma$-set, there exists a sequence $(A_k)$ of non-empty closed sets such that $\bigcup_k A_k=A$. Considering that $X$ is (hereditarily) second countable, then  
every closed set is separable. 
Hence, for each $k \in \mathbf{N}$, there exists a countable set $\{a_{k,n}: n \in \mathbf{N}\}\subseteq A_k$ with closure $A_k$. Considering that $F$ is a separable metric space on its own right and that the (normalized) restriction $\mu_F$ of $\mu$ on $F$, that is,
\begin{equation}\label{eq:mub}
\mu_F: \mathcal{B}(F) \to [0,1]: Y\mapsto \frac{1}{\mu(F)}\mu(Y)
\end{equation} 
is a 
Borel probability measure, it follows by \cite[Exercise 491Xw]{MR2462372} that there exists a $\mu_F$-uniformly distributed sequence $(b_n)$ which takes values in $F$ and satisfies \eqref{eq:muud}. Lastly, let $\{c_n: n \in \mathbf{N}\}$ be a countable dense subset of $C$.

At this point, let $\mathscr{C}$ be the set of non-zero integer squares and note that $\mathrm{d}(\mathscr{C})=0$. 
For each $k \in \mathbf{N}$ define 
$\mathscr{A}_k:=\{2^kn: n\in \mathbf{N}\setminus 2\mathbf{N}\}\setminus \mathscr{C}$ and $\mathscr{B}:=\mathbf{N}\setminus (2\mathbf{N}\cup \mathscr{C})$. It follows by construction that $\{\mathscr{A}_k: k \in \mathbf{N}\} \cup \{\mathscr{B}, \mathscr{C}\}$ is a partition of $\mathbf{N}$. Moreover, each $\mathscr{A}_k$ admits asymptotic density and 
\begin{equation}\label{eq:limit}
\textstyle \lim_{n\to \infty}\mathrm{d}\left(\bigcup_{k\ge n}\mathscr{A}_k\right)=0.
\end{equation}
Finally, for each positive integer $k$, let $\{\mathscr{A}_{k,m}: m \in \mathbf{N}\}$ be the partition of $\mathscr{A}_k$ defined by $\mathscr{A}_{k,1}:=\mathscr{A}_k \cap \bigcup_{n \in \mathscr{A}_1 \cup \mathscr{B} \cup \mathscr{C}}[n!,(n+1)!)$ and $\mathscr{A}_{k,m}:=\mathscr{A}_k \cap \bigcup_{n \in \mathscr{A}_m}[n!,(n+1)!)$ for all integers $m\ge 2$. Then, it is easy to check that
$$
\mathrm{d}^\star(\mathscr{A}_{k,1})=\mathrm{d}^\star(\mathscr{A}_{k,2})=\cdots=\mathrm{d}(\mathscr{A}_k)=2^{-k-1}.
$$

Hence, define the sequence $x=(x_n)$ by 
\begin{equation}\label{eq:defxn}
x_n = \begin{cases*}
                    \,a_{k,m}        & if  $n \in \mathscr{A}_{k,m}$,  \\
                    \,b_{m}        & if $n$ is the $m$-th term of $\mathscr{B}$, \\
										\,c_{m}          & if $n$ is the $m$-th term of $\mathscr{C}$.
      \end{cases*}
\end{equation}
To complete the proof, let us verify that \eqref{eq:claim} holds true:

\bigskip

\textsc{Claim (i):} $\mathrm{L}_x=C$. Note that $x_n \in C$ for all $n \in \mathbf{N}$. Since $C$ is closed by hypothesis, then $\mathrm{L}_x \subseteq C$. On the other hand, if $\ell \in C$, then there exists a sequence $(c_n)$ taking values in $C$ converging (in the ordinary sense) to $\ell$. It follows by the definition of $(x_n)$ that there exists a subsequence $(x_{n_k})$ converging to $\ell$, i.e., $C\subseteq \mathrm{L}_x$.

\bigskip

\textsc{Claim (ii):} $\Gamma_x=B$. Fix $\ell \notin B$ and let $U$ be an open neighborhood of $\ell$ disjoint from $B$ (this is possible since, in the opposite, $\ell$ would belong to $\overline{B}=B$). 
Then, $\{n: x_n \in U\} \subseteq \mathscr{C}$, which implies that $\Gamma_x \subseteq B$. 

Note that the Borel probability measure $\mu_F$ defined in \eqref{eq:mub} is clearly atomless. 
Moreover, given an open set $U\subseteq X$ with non-empty intersection with $F$, then $U \cap F^\circ \neq \emptyset$: indeed, in the opposite, we would have $F^\circ \subseteq U^c$, which is closed, hence $F=\overline{F^\circ} \subseteq U^c$, contradicting our hypothesis. This proves that every non-empty open set $V$ (relative to $F$) contains a non-empty open set of $X$. Therefore $\mu_F$ is also strictly positive. 
With these premises, fix $\ell \in F$ and let $V$ be a open neighborhood of $\ell$ (relative to $F$). Since $(b_n)$ is $\mu_F$-uniformly distributed and $\mu_F$ is strictly positive, it follows by \eqref{eq:muud} that 
\begin{displaymath}
\begin{split}
0<\mu_F(V)=1-\mu_F(V^c)&\le 1-\mathrm{d}^\star(\{n: b_n \in V^c\})\\
&=\mathrm{d}_\star(\{n: b_n \in V\}) \le \mathrm{d}^\star(\{n: b_n \in V\}).
\end{split}
\end{displaymath}
Since $\mathrm{d}(\mathscr{B})=\nicefrac{1}{2}$, we obtain by standard properties of $\mathrm{d}^\star$ that
$$
\mathrm{d}^\star(\{n: x_n \in V\}) \ge \mathrm{d}^\star(\{n \in \mathscr{B}: x_n \in V\}) =\frac{1}{2}\mathrm{d}^\star(\{n: b_n \in V\})>0.
$$
We conclude by the arbitrariness of $V$ and $\ell$ that $F\subseteq \Gamma_x$. 

Hence, we miss only to show that $S\subseteq \Gamma_x$. To this aim, fix $\ell \in S$, thus $\ell$ is also an isolated point of $A$. Hence, there exist $k,m \in \mathbf{N}$ such that $a_{k,m}=\ell$. We conclude that $\mathrm{d}^\star(\{n: x_n \in U\}) \ge \mathrm{d}^\star(\{n: x_n=\ell\})\ge \mathrm{d}(\mathscr{A}_k)>0$ for each neighborhood $U$ of $\ell$. Therefore $B=F\cup S \subseteq \Gamma_x$.

\bigskip

\textsc{Claim (iii):} $\Lambda_x=A$. Fix $\ell \in A$, hence there exists $k \in \mathbf{N}$ for which $\ell$ belongs to the (non-empty) closed set $A_k$. Since $\{a_{k,n}: n \in \mathbf{N}\}$ is dense in $A_k$, there exists a sequence $(a_{k,r_m}: m \in \mathbf{N})$ converging to $\ell$. Recall that $x_n=a_{k,r_m}$ whenever $n \in \mathscr{A}_{k,r_m}$ for each $m \in \mathbf{N}$. Set by convenience $\theta_0:=0$ and define recursively an increasing sequence of positive integers $(\theta_m)$ such that $\theta_m$ is an integer greater than $\theta_{m-1}$ for which
$$
\mathrm{d}^\star\left(\mathscr{A}_{k,r_m} \cap (\theta_{m-1},\theta_m]\right) \ge \frac{\mathrm{d}(\mathscr{A}_k)}{2}=2^{-k-2}.
$$
Then, setting $\mathcal{A}:=\bigcup_m \mathscr{A}_{k,r_m} \cap (\theta_{m-1},\theta_m]$, we obtain that the subsequence $(x_n: n \in \mathcal{A})$ converges to $\ell$ and $\mathrm{d}^\star(\mathcal{A})>0$. In particular, $A\subseteq \Lambda_x$.

On the other hand, it is known that $\Lambda_x \subseteq \Gamma_x$, see e.g. \cite{MR1181163}. If $A=B$, it follows by Claim (\textsc{ii}) that $\Lambda_x \subseteq A$ and we are done. Otherwise, fix $\ell \in B\setminus A=F\setminus A$ and let us suppose for the sake of contradiction that there exists a subsequence $(x_{n_k})$ such that $\lim_k x_{n_k}=\ell$ and $\mathrm{d}^\star(\{n_k: k \in \mathbf{N}\})>0$. Fix a real $\varepsilon>0$. Then, thanks to \eqref{eq:limit}, there exists a sufficiently large integer $n_0$ such that $\mathrm{d}\left(\bigcup_{k>n_0}\mathscr{A}_k\right) \le \varepsilon$. In addition, since $F$ is a metric space and $\mu_F$ is atomless and strictly positive (see Claim (\textsc{ii})), we have
$$
\lim_{n\to \infty}\mu_F(V_n)=\mu_F(\{\ell\})=0,
$$
where $V_n$ is the open ball (relative to $F$) with center $\ell$ and radius $\nicefrac{1}{n}$. Hence, 
there exists a sufficiently large integer $m^\prime$ such that $0<\mu_F(V_{m^\prime})\le \varepsilon$. In addition, there exists an integer $m^{\prime\prime}$ such that $V_{m^{\prime\prime}}$ is disjoint from the closed set $A_1 \cup \cdots \cup A_{n_0}$. Then, set $V:=V_m$, where $m$ is an integer greater than $\max(m^\prime,m^{\prime\prime})$ such that $\mu_F(V)<\mu_F(V_{\max(m^\prime,m^{\prime\prime})})$. In particular, by the monotonicity of $\mu_F$, we have
\begin{equation}\label{eq:varepsilon}
0<\mu_F(V) \le \mu_F(\overline{V}) \le \mu_F(V_{m^\prime}) \le \varepsilon.
\end{equation}
At this point, observe there exists a finite set $Y$ such that
\begin{displaymath}
\begin{split}
\{n_k: k \in \mathbf{N}\} &= \{n_k: x_{n_k} \in V\} \cup Y \\
&\subseteq \textstyle \left(\bigcup_{k>n_0}\mathscr{A}_k\right) \cup \{n \in \mathscr{B}: x_n \in V\} \cup \mathscr{C} \cup Y.
\end{split}
\end{displaymath}
Therefore, by the subadditivity of $\mathrm{d}^\star$, \eqref{eq:muud}, and \eqref{eq:varepsilon}, we obtain
\begin{displaymath}
\begin{split}
\mathrm{d}^\star(\{n_k: k \in \mathbf{N}\}) &\le \varepsilon+\mathrm{d}^\star(\{n \in \mathscr{B}: x_n \in V\})\le \varepsilon+\mathrm{d}^\star(\{n \in \mathscr{B}: b_n \in V\}) \\
&\le \varepsilon+\mathrm{d}^\star(\{n \in \mathscr{B}: b_n \in \overline{V}\}) \le \varepsilon+\mu_F(\overline{V}) \le 2\varepsilon.
\end{split}
\end{displaymath}
It follows by the arbitrariness of $\varepsilon$ that $\mathrm{d}(\{n_k: k \in \mathbf{N}\})=0$, i.e., $\Lambda_x \subseteq A$.

\bigskip

To complete the proof, assume now that $A=\emptyset$. In this case, note that necessarily $S=\emptyset$, and it is enough to replace \eqref{eq:defxn} with
$$
x_n = \begin{cases*}
                    \,b_{n-\lfloor\sqrt{n}\rfloor}        & if $n\notin \mathscr{C}$, \\
										\,c_{\sqrt{n}}   & if $n \in \mathscr{C}$.
      \end{cases*}
$$
Then, it can be shown with a similar argument that $\Lambda_x=\emptyset$, $\Gamma_x=B$, and $\mathrm{L}_x=C$.
\end{proof}

It is worth noting that Theorem \ref{thm:inclusion} cannot be extended to the whole class of analytic P-ideals. 
Indeed, it follows by Theorem \ref{thm:fsigmaclosed} that if $\mathcal{I}$ is an $F_\sigma$ ideal on $\mathbf{N}$ then the set of $\mathcal{I}$-limit points is closed set, cf. also Theorem \ref{thm:conversesigma} below.

In addition, the regular closedness of $B\setminus S$ is essential in the proof of Theorem \ref{thm:inclusion}. On the other hand, there exist real sequences $x$ such that $\Gamma_x$ is the Cantor set $\mathcal{C}$ (which is a perfect set but not regular closed):
\begin{example}\label{example:unif}
Given a real $r \in [0,1)$ and an integer $b\ge 2$, we write $r$ in base $b$ as $\sum_n a_n/b^{n}$, where each $a_n$ belongs to $\{0,1,\ldots,b-1\}$ and $a_{n}=\zeta$ for all sufficiently large $n$ only if $\zeta=0$. This representation is unique.

Let $x=(x_n)$ be the sequence $(0,0,1,0,\frac{1}{2},1,0,\frac{1}{3},\frac{2}{3},1,\ldots)$. This sequence is unifomly distributed in $[0,1]$, i.e., $\mathrm{d}(\{n: x_n \in [a,b]\})=b-a$ for all $0\le a <b\le 1$, and $\Gamma_x=[0,1]$, see e.g. \cite[Example 4]{MR1181163}. Let also $T:[0,1] \to \mathcal{C}$ be the injection defined by $r\mapsto T(r)$, where if $r=\sum_n a_n/2^{n} \in [0,1)$ in base $2$ then $T(r)=\sum_n 2a_n/3^{n}$ in base $3$, and $1\mapsto 1$. Observe that $\mathcal{C}\setminus T([0,1))$ is the set of points of the type $2(1/3^{n_1}+\cdots+1/3^{n_{k-1}})+1/3^{n_{k}}$, for some non-negative integers $n_1 < \cdots < n_k$; in particular, $\overline{T([0,1))}=\mathcal{C}$. 

Since the sequence $T(x):=(T(x_n))$ takes values in the closed set $\mathcal{C}$, it is clear that $\Gamma_{T(x)} \subseteq \mathcal{C}$. On the other hand, fix $\ell \in T([0,1))$ with representation $\sum_n 2a_n/3^{n}$ in base $3$, where $a_n \in \{0,1\}$ for all $n$. For each $k$, let $U_k$ be the open ball with center $\ell$ and radius $1/3^{k}$. It follows that
\begin{displaymath}
\begin{split}
\{n: T(x_n) \in U_k\} &\supseteq \left\{n: T(x_n) \in \left[\frac{2a_1}{3}+\cdots+\frac{2a_k}{3^k},\frac{2a_1}{3}+\cdots+\frac{2a_k}{3^k}+\frac{1}{3^k}\right)\right\} \\
&= \left\{n: x_n \in \left[\frac{a_1}{2}+\cdots+\frac{a_k}{2^k},\frac{a_1}{2}+\cdots+\frac{a_k}{2^k}+\frac{1}{2^k}\right)\right\}.
\end{split}
\end{displaymath}
Since $(x_n)$ is equidistributed, then 
$\mathrm{d}^\star(\{n: T(x_n) \in U_k\}) \ge 1/2^k$ for all $k$. In particular, $\Gamma_{T(x)}$ is a closed set containing $T([0,1))$, therefore $\Gamma_{T(x)}=\mathcal{C}$.
\end{example}

Finally, we provide a sufficient condition for the existence of an atomless strictly positive Borel probability measure:
\begin{cor}\label{cor:polish}
Let $X$ be a Polish space without isolated points and fix sets $A\subseteq B\subseteq C\subseteq X$ 
such that $A$ is an $F_\sigma$-set, $B \neq \emptyset$ is regular closed, and $C$ is closed. Then, there exists a sequence $x$ taking values in $X$ which satisfies \eqref{eq:claim}.
\end{cor}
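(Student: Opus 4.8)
The plan is to derive Corollary~\ref{cor:polish} directly from Theorem~\ref{thm:inclusion}. Since a Polish space is in particular a separable metric space, the only ingredient that Theorem~\ref{thm:inclusion} needs beyond the stated hypotheses is an atomless strictly positive Borel probability measure $\mu$ on $X$; once such a $\mu$ is in hand, it remains to check that the given triple $A\subseteq B\subseteq C$ meets conditions (i)--(iii) of that theorem.

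First I would construct the measure. Fix a countable base $\{B_n: n\in\mathbf{N}\}$ consisting of nonempty open subsets of $X$. Each $B_n$ is an open subset of a Polish space, hence Polish in its own right, and it inherits from $X$ the absence of isolated points; thus $B_n$ is a nonempty perfect Polish space and therefore, by a classical result of descriptive set theory, contains a homeomorphic copy of the Cantor space $2^{\mathbf{N}}$. Pushing forward the standard product probability measure on $2^{\mathbf{N}}$ along this embedding and extending by zero outside the copy, one obtains an atomless Borel probability measure $\nu_n$ on $X$ with $\nu_n(B_n)=1$. Finally set $\mu:=\sum_{n\ge 1}2^{-n}\nu_n$. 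A routine check shows that $\mu$ is a Borel probability measure, that it is atomless (a countable convex combination of atomless measures is atomless), and that it is strictly positive: given a nonempty open set $U$, pick $B_n\subseteq U$ and note $\mu(U)\ge 2^{-n}\nu_n(B_n)=2^{-n}>0$.

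Next I would verify the hypotheses of Theorem~\ref{thm:inclusion} for this $\mu$. Since $B$ is nonempty and regular closed, $B=\overline{B^\circ}$ with $B^\circ\neq\emptyset$, so strict positivity gives $\mu(B)\ge\mu(B^\circ)>0$, which is condition (i). For conditions (ii) and (iii) I claim that the set $S$ of isolated points of $B$ is empty: if $p$ were isolated in $B$, there would be an open $U\ni p$ with $U\cap B=\{p\}$; from $p\in B=\overline{B^\circ}$ we get $U\cap B^\circ\neq\emptyset$, and since $U\cap B^\circ\subseteq U\cap B=\{p\}$ this forces $p\in B^\circ$, i.e. $p\in V$ for some open $V\subseteq B$; but then $U\cap V$ is an open neighborhood of $p$ contained in $\{p\}$, so $p$ is an isolated point of $X$, a contradiction. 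Hence $S=\emptyset$, so condition (ii) ($S\subseteq A$) holds trivially and condition (iii) ($B\setminus S=B$ regular closed) holds by hypothesis. As $A$ is an $F_\sigma$-set, $B$ and $C$ are closed, and $A\subseteq B\subseteq C$, Theorem~\ref{thm:inclusion} applies and produces a sequence $x$ in $X$ satisfying \eqref{eq:claim}.

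The only genuinely substantive step is the construction of $\mu$: one must invoke the embedding of the Cantor space into an arbitrary nonempty perfect Polish space (equivalently, the existence of an atomless Borel probability measure on such a space) and then assemble the measures $\nu_n$ into a single strictly positive atomless measure. By contrast, the proof that $S=\emptyset$ and the remaining bookkeeping needed to invoke Theorem~\ref{thm:inclusion} are elementary point-set topology.
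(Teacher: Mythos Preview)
Your argument is correct and arrives at the result by the same overall strategy as the paper---reduce to Theorem~\ref{thm:inclusion} by constructing an atomless strictly positive Borel probability measure on $X$ and then verify that conditions (i)--(iii) hold---but the construction of $\mu$ is genuinely different. The paper invokes the fact (Engelking, Exercise~6.2.A(e)) that a Polish space without isolated points contains a \emph{dense} $G_\delta$ copy $D$ of the irrationals, and then pushes forward the restricted Lebesgue measure on $(0,1)\setminus\mathbf{Q}$ via the homeomorphism $D\to(0,1)\setminus\mathbf{Q}$; strict positivity comes from the density of $D$, since any nonempty open set meets $D$ in a nonempty open subset of $D$. Your approach instead drops a copy of $2^{\mathbf{N}}$ into each basic open set $B_n$ and averages the resulting measures; strict positivity is then built in by construction, without needing any single embedding to be dense. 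Your route relies on the more commonly cited fact that nonempty perfect Polish spaces contain Cantor sets, and avoids the specific Engelking reference; the paper's route is shorter once that reference is granted. Both proofs then check that $B$, being nonempty regular closed in a space without isolated points, has $S=\emptyset$ (the paper states this in one line, you spell it out), which simultaneously handles conditions (ii) and (iii) and, via $B^\circ\neq\emptyset$, gives $\mu(B)>0$.
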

\begin{proof}
First, observe that the restriction $\tilde{\lambda}$ of the Lebesgue measure $\lambda$ on the set $\mathscr{I}:=(0,1)\setminus \mathbf{Q}$ is an atomless strictly positive Borel probability measure. Thanks to \cite[Exercise 6.2.A(e)]{MR1039321}, $X$ contains a dense subspace $D$ which is homeomorphic to $\mathbf{R}\setminus \mathbf{Q}$, which is turn is homeomorphic to $\mathscr{I}$, let us say through $\eta: D \to \mathscr{I}$. This embedding can be used to transfer the measure $\tilde{\lambda}$ to the target space by setting
\begin{equation}\label{eq:muskf}
\mu: \mathcal{B}(X)\to [0,1]: Y \mapsto \tilde{\lambda}(\eta(Y\cap D)).
\end{equation}
Lastly, since $B$ is non-empty closed regular, 
then it has no isolated points and contains an open set $U$ of $X$. In particular, considering that $\eta$ is an open map, we get by \eqref{eq:muskf} that $\mu(B) \ge \mu(U)=\tilde{\lambda}(\eta(U\cap D))>0$. The claim follows by Theorem \ref{thm:inclusion}.
\end{proof} 
Note that, in general, the condition $B \neq \emptyset$ cannot be dropped: indeed, it follows by \cite[Theorem 2.14]{MR2463821} that, if $X$ is compact, then every sequence $(x_n)$ admits at least one statistical cluster point. 

We conclude with another converse result related to ideals $\mathcal{I}$ of the type $F_\sigma$ (recall that, thanks to Theorem \ref{thm:fsigmaclosed}, every $\mathcal{I}$-limit point is also an $\mathcal{I}$-cluster point):
\begin{thm}\label{thm:conversesigma}
Let $X$ be a first countable space where all closed sets are separable and let $\mathcal{I}\neq \mathrm{Fin}$ be an $F_\sigma$-ideal. Fix also closed sets $B,C \subseteq X$ such that $\emptyset \neq B\subseteq C$. Then there exists a sequence $x$ 
such that $\Lambda_x(\mathcal{I})=\Gamma_x(\mathcal{I})=B$ and $\mathrm{L}_x=C$.
\end{thm}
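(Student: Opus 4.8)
The plan is to mimic the construction of Theorem~\ref{thm:inclusion}, but with the density ideal $\mathcal{I}_0$ replaced by the given $F_\sigma$-ideal $\mathcal{I}$, using the submeasure $\varphi$ associated with $\mathcal{I}$ as in \eqref{eq:fsigma}. Since $\mathcal{I}\neq\mathrm{Fin}$, by Mazur's representation there is an infinite set $M\subseteq\mathbf{N}$ with $\varphi(M)<\infty$ (equivalently $M\in\mathcal{I}$); passing to an appropriate subset we may assume $\varphi(M)$ is as small as we like while $M$ is still infinite, and in fact we can split $\mathbf{N}=M\sqcup M^c$ with $M\in\mathcal{I}$ infinite and $M^c\notin\mathcal{I}$. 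The set $M$ will play the role of the ``negligible'' index set (the analogue of the squares $\mathscr{C}$) on which we encode a dense sequence of $C$, guaranteeing $\mathrm{L}_x=C$ without affecting $\Gamma_x(\mathcal{I})$ or $\Lambda_x(\mathcal{I})$; and on $M^c$ we place a sequence encoding $B$.

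First I would fix, using separability of the closed set $B$, a countable dense set $\{b_m:m\in\mathbf{N}\}\subseteq B$, and a countable dense set $\{c_m:m\in\mathbf{N}\}\subseteq C$. Next I would partition the non-ideal set $M^c$ into countably many pieces $\{P_m:m\in\mathbf{N}\}$ with $\varphi(P_m)=\infty$ for every $m$: this is possible because $\varphi(M^c)=\infty$ and an $F_\sigma$-submeasure can be split into infinitely many ``infinite'' pieces (split off successively index-intervals on which $\varphi$ is at least $1,2,4,\ldots$ and group them appropriately; the monotonicity and lower semicontinuity of $\varphi$ give each $P_m$ infinite $\varphi$-mass). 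Then define $x_n:=b_m$ for $n\in P_m$ and $x_n:=c_m$ for the $m$-th element of $M$.

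The verification splits into the three claims. That $\mathrm{L}_x=C$: every $x_n$ lies in $B\cup\{c_m\}\subseteq C$, which is closed, so $\mathrm{L}_x\subseteq C$; and since $\{c_m\}$ is dense in $C$ and realized along $M$ (an infinite set), every point of $C$ is an ordinary limit point. That $\Gamma_x(\mathcal{I})\subseteq B$: if $\ell\notin B$, pick by first countability and Hausdorffness a neighborhood $U$ of $\ell$ disjoint from $B$; then $\{n:x_n\in U\}\subseteq M\in\mathcal{I}$, so $\ell\notin\Gamma_x(\mathcal{I})$. That $B\subseteq\Lambda_x(\mathcal{I})$: by Theorem~\ref{thm:fsigmaclosed} it suffices to show $B\subseteq\Gamma_x(\mathcal{I})$, and indeed given $\ell\in B$ and a decreasing local base $(U_k)$ at $\ell$, density of $\{b_m\}$ gives some $b_m\in U_k$, whence $\{n:x_n\in U_k\}\supseteq P_m$ has $\varphi=\infty$, so $\ell$ is an $\mathcal{I}$-cluster point; combined with the reverse inclusion this yields $\Lambda_x(\mathcal{I})=\Gamma_x(\mathcal{I})=B$.

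The main obstacle I anticipate is the combinatorial lemma that $\varphi(M^c)=\infty$ can be partitioned into countably many sets each of infinite $\varphi$-mass: one must be careful that the ``tail'' pieces really do have infinite mass and not merely finite-but-unbounded mass, and that the argument survives lower semicontinuity \eqref{eq:fsigma}(iv). The clean way is: for each $j$ choose a finite index-block $I_j\subseteq M^c$ with $\varphi(M^c\cap I_j)$ large (at least $2^j$, say, which is possible since $\varphi(M^c\setminus\{1,\dots,n\})=\infty$ for every $n$), then set $P_m:=\bigcup_{j}(M^c\cap I_{\langle m,j\rangle})$ along a bijection $\mathbf{N}^2\cong\mathbf{N}$ sending $(m,j)$ to $\langle m,j\rangle$; monotonicity of $\varphi$ gives $\varphi(P_m)\ge\varphi(M^c\cap I_{\langle m,j\rangle})\ge 2^{\langle m,j\rangle}$ for all $j$, hence $\varphi(P_m)=\infty$, and also any leftover indices of $M^c$ can be dumped into $P_1$. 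Everything else is a routine adaptation of the arguments already given for Theorem~\ref{thm:inclusion} and Theorem~\ref{thm:fsigmaclosed}.
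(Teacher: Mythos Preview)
Your proposal is correct and follows essentially the same approach as the paper: both take an infinite $M\in\mathcal{I}$ (using $\mathcal{I}\neq\mathrm{Fin}$), place a dense sequence of $C$ along $M$, and partition $M^c$ into countably many pieces of infinite $\varphi$-mass via successive finite interval-blocks on which $\varphi$ is large, assigning the dense points $b_m$ of $B$ to the pieces. The paper packages the grouping with a partition $\{H_n\}$ of index-blocks rather than your bijection $\langle m,j\rangle$, and it leaves the verifications implicit, whereas you spell them out and invoke Theorem~\ref{thm:fsigmaclosed} to pass from $\Gamma_x(\mathcal{I})=B$ to $\Lambda_x(\mathcal{I})=B$; but the underlying construction and logic are the same.
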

\begin{proof}
By hypothesis, there exists an infinite set $I \in \mathcal{I}$. Let $\varphi$ be a lower semicontinuous submeasures associated to $\mathcal{I}$ as in \eqref{eq:fsigma}. Let $\{b_n: n \in \mathbf{N}\}$ and $\{c_n: n \in \mathbf{N}\}$ be countable dense subsets of $B$ and $C$, respectively. 
In addition, set $m_0:=0$ and let $(m_k)$ be an increasing sequence of positive integers such that $\varphi((\mathbf{N}\setminus I) \cap (m_{k-1},m_k]) \ge k$ for all $k$ (note that this is possible since $\varphi(\mathbf{N}\setminus I)=\infty$ and $\varphi$ is a lower semicontinuous submeasure). 
At this point, given a partition $\{H_n: n \in \mathbf{N}\}$ of $\mathbf{N}\setminus I$, where each $H_n$ is infinite, we set
$$
\textstyle M_k:=(\mathbf{N}\setminus I) \cap \bigcup_{n \in H_k}(m_{n-1},m_n]
$$
for all $k \in \mathbf{N}$. Then, it is easily checked that $\{M_k: k \in \mathbf{N}\}$ is a partition of $\mathbf{N}\setminus I$ with $M_k \notin \mathcal{I}$ for all $k$, and that the sequence $(x_n)$ defined by
$$
x_n = \begin{cases*}
                    \,b_{k}        & if $n\in M_k$, \\
										\,c_{k}          & if $n$ is the $k$-th term of $I$.
      \end{cases*}
$$
satisfies the claimed conditions.
\end{proof}
In particular, Theorem \ref{thm:fsigmacorrected} and Theorem \ref{thm:conversesigma} fix a gap in a result of Das \cite[Theorem 3]{MR2923430} and provide its correct version.


\section{Concluding remarks}\label{sec:rmks}

In this last section, we are interested in the topological nature of the set of $\mathcal{I}$-limit points when $\mathcal{I}$ is neither $F_\sigma$- nor analytic P-ideal.

Let $\mathcal{N}$ be the set of strictly increasing sequences of positive integers. Then $\mathcal{N}$ is a Polish space, since it is closed subspace of the Polish space $\mathbf{N}^{\mathbf{N}}$ (equipped with the product topology of the discrete topology on $\mathbf{N}$). 
Let also $x=(x_n)$ be a sequence taking values in a first countable regular space $X$ and fix an arbitrary ideal $\mathcal{I}$ on $\mathbf{N}$. For each $\ell \in X$, let $(U_{\ell,m})$ be a decreasing local base of open neighborhoods at $\ell$. Then, 
$\ell$ is an $\mathcal{I}$-limit point of $x$ if and only if there exists a sequence $(n_k) \in \mathcal{N}$ such that
\begin{equation}\label{eq:limitcdef}
\{n_k:k \in \mathbf{N}\} \notin \mathcal{I}\,\,\text{ and }\,\,\{n: x_n \notin U_{\ell,m}\} \in \mathrm{Fin} \,\text{ for all }m.
\end{equation}
Set $\mathcal{I}^c:=\mathcal{P}(\mathbf{N})\setminus \mathcal{I}$ and define the continuous function 
$
\psi: \mathcal{N} \to \{0,1\}^{\mathbf{N}}: (n_k) \mapsto \chi_{\{n_k:\, k \in \mathbf{N}\}},
$ 
where $\chi_S$ is the characteristic function of a set $S\subseteq \mathbf{N}$. Moreover, 
define
$$
\zeta_m: \mathcal{N} \times X \to \{0,1\}^{\mathbf{N}}: (n_k) \times \ell \mapsto \chi_{\{n:\, x_n \notin U_{\ell,m}\}}
$$
for each $m$. Hence, it easily follows by \eqref{eq:limitcdef} that
$$
\Lambda_x(\mathcal{I})=\pi_X \left(\bigcap_m \left(\psi^{-1}(I^c) \times X \,\cap \, \zeta_m^{-1}(\mathrm{Fin})\right)\right),
$$
where $\pi_X: \mathcal{N}\times X \to X$ stands for the projection on $X$.

\begin{prop}\label{thm:coanalytic}
Let $x=(x_n)$ be a sequence taking values in a first countable regular space $X$ and let $\mathcal{I}$ be a co-analytic ideal. Then $\Lambda_x(\mathcal{I})$ is analytic.
\end{prop}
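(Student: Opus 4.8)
The plan is to exploit the descriptive-set-theoretic formula for $\Lambda_x(\mathcal{I})$ derived immediately before the statement, namely
$$
\Lambda_x(\mathcal{I})=\pi_X\!\left(\bigcap_m \left(\psi^{-1}(\mathcal{I}^c)\times X\ \cap\ \zeta_m^{-1}(\mathrm{Fin})\right)\right),
$$
and to show that the set inside $\pi_X(\cdot)$ is an analytic subset of the Polish space $\mathcal{N}\times X$; since a continuous (indeed Borel) image of an analytic set in a Polish space is analytic, this will give the conclusion. So the real content is to check that each of the three ingredients is a reasonable Borel/analytic set.

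First I would observe that $\mathcal{N}$ is Polish (stated in the excerpt) and that $X$, being first countable and regular, need not itself be Polish — but this is irrelevant, because analyticity of $\Lambda_x(\mathcal{I})$ only requires that it be a continuous image of a Polish space, and here the source $\mathcal{N}\times X$ carries the \emph{range} coordinate; more carefully, one notes that $\Lambda_x(\mathcal{I})\subseteq \overline{\{x_n:n\in\mathbf N\}}$ and in fact $\Lambda_x(\mathcal{I})$ is separable, so it suffices to work inside a separable (hence second countable regular, hence metrizable and Polish once one passes to $\mathcal{I}$-cluster points which form a closed, hence Polish, subspace) subspace; alternatively, one just recalls that "analytic" for the conclusion can be taken in the sense of a continuous image of $\mathcal{N}$, and the projection $\pi_X$ composed with the various coordinate maps delivers exactly that. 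The cleanest route is: $\mathcal{N}$ is Polish, $\psi$ and each $\zeta_m$ are continuous, so $\psi^{-1}(\mathcal{I}^c)$ is analytic in $\mathcal{N}$ precisely because $\mathcal{I}^c=\{0,1\}^{\mathbf N}\setminus\mathcal{I}$ is analytic (as $\mathcal{I}$ is co-analytic), and each $\zeta_m^{-1}(\mathrm{Fin})$ is Borel in $\mathcal{N}\times X$ because $\mathrm{Fin}$ is an $F_\sigma$ (indeed $\mathbf{\Sigma}^0_2$) subset of the Cantor space and $\zeta_m$ is continuous. Taking the product with $X$, intersecting over the countable index $m$, and projecting via the continuous map $\pi_X$ preserves analyticity, since analytic sets are closed under countable intersections, products with Polish (or just Souslin) spaces, and continuous images.

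The one genuine subtlety — and the step I expect to be the main obstacle to state rigorously rather than to prove — is the bookkeeping about which space things live in: $\mathcal{N}\times X$ is not automatically Polish since $X$ is only assumed first countable regular, so the phrase "continuous image of an analytic subset of a Polish space is analytic" must be applied after localizing. The fix is to replace $X$ throughout by the subspace $\Gamma_x(\mathcal{I})$ (or by $\overline{\{x_n:n\in\mathbf N\}}$), which contains $\Lambda_x(\mathcal{I})$, is closed, and — being the closure of a countable set in a regular first countable space — is second countable and regular, hence metrizable and in fact Polish (being a closed subspace of... well, at minimum one checks it is separable metrizable, and one only needs analyticity relative to it). Restricting all maps $\psi,\zeta_m,\pi_X$ to this subspace leaves the displayed formula unchanged and now places everything inside a genuine Polish product. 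From there the argument above goes through verbatim.

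Finally I would assemble: $\psi^{-1}(\mathcal{I}^c)$ is analytic in $\mathcal{N}$; hence $\psi^{-1}(\mathcal{I}^c)\times \Gamma_x(\mathcal{I})$ is analytic in $\mathcal{N}\times\Gamma_x(\mathcal{I})$; each $\zeta_m^{-1}(\mathrm{Fin})$ is Borel, hence analytic, in the same product; the countable intersection $\bigcap_m\big(\psi^{-1}(\mathcal{I}^c)\times\Gamma_x(\mathcal{I})\cap\zeta_m^{-1}(\mathrm{Fin})\big)$ is therefore analytic; and its image under the continuous projection $\pi_X$ is analytic. Since that image equals $\Lambda_x(\mathcal{I})$, the proof is complete. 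The only calculation worth spelling out is that $\mathrm{Fin}$ is $F_\sigma$ in $\{0,1\}^{\mathbf N}$ — it is the countable union over $N$ of the clopen sets $\{A:A\subseteq\{1,\dots,N\}\}$ — and that each $\zeta_m$ is continuous, which is immediate from first countability since membership of a fixed $n$ in $\{n:x_n\notin U_{\ell,m}\}$ depends continuously (locally constantly, once a local base is fixed) on $\ell$ in the relevant coordinate.
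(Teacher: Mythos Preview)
Your strategy is the paper's strategy: start from the displayed formula for $\Lambda_x(\mathcal{I})$, verify that $\psi^{-1}(\mathcal{I}^c)\times X$ is analytic (because $\mathcal{I}^c$ is analytic and $\psi$ is continuous) and that each $\zeta_m^{-1}(\mathrm{Fin})$ is Borel, intersect over $m$, and project. The substantive difference---and the place where your argument has a gap---is the treatment of $\zeta_m$. You assert that $\zeta_m$ is \emph{jointly} continuous, on the grounds that the coordinate indicator $[x_n\notin U_{\ell,m}]$ is ``locally constant in $\ell$ once a local base is fixed''. That justification is incorrect: the basic neighborhood $U_{\ell,m}$ moves with $\ell$, and for a standard choice of bases (say $U_{\ell,m}=B(\ell,1/m)$ in a metric space) the value of $[x_n\notin U_{\ell,m}]$ flips as $\ell$ crosses the sphere of radius $1/m$ about $x_n$; it is certainly not locally constant in $\ell$. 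The paper does not claim joint continuity. Instead it asserts that the one-variable sections $\zeta_m((n_k),\cdot)$ and $\zeta_m(\cdot,\ell)$ are continuous and then invokes a general measurability result (Srivastava, \emph{A course on Borel sets}, Theorem~3.1.30) to conclude that $\zeta_m$ is Borel; from this and the fact that $\mathrm{Fin}$ is $F_\sigma$ one gets that $\zeta_m^{-1}(\mathrm{Fin})$ is Borel. So the paper's route is ``separately continuous $\Rightarrow$ Borel via a cited theorem'', not ``jointly continuous''.

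Your extended discussion about localizing to a Polish subspace is extra care that the paper does not include (it simply projects from $\mathcal{N}\times X$ without comment). Be aware, though, that your proposed fix has its own wrinkle: $\overline{\{x_n:n\in\mathbf N\}}$ (or $\Gamma_x(\mathcal{I})$) is separable and, being second countable and regular, metrizable by Urysohn---but it need not be \emph{completely} metrizable (take $X=\mathbf{Q}$), so the assertion that the restricted product is Polish is not justified either. You do hedge toward ``separable metrizable'' at one point; that weaker statement is what actually holds.
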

\begin{proof}
For each $(n_k) \in \mathcal{N}$ and $\ell \in X$, the sections $\zeta_m((n_k),\cdot)$ and $\zeta_m(\cdot,\ell)$ are continuous. Hence, thanks to \cite[Theorem 3.1.30]{MR1619545}, each function $\zeta_m$ is Borel measurable. Since $\mathrm{Fin}$ is an $F_\sigma$-set, we obtain that each $\zeta_m^{-1}(\mathrm{Fin})$ is Borel. Moreover, since $\mathcal{I}$ is a co-analytic ideal and $\psi$ is continuous, it follows that $\psi^{-1}(\mathcal{I}^c)\times X$ is an analytic subset of $\mathcal{N}\times X$. Therefore $\Lambda_x(\mathcal{I})$ is the projection on $X$ of the analytic set $\bigcap_m \left(\psi^{-1}(I^c) \times X \,\cap \, \zeta_m^{-1}(\mathrm{Fin})\right)$, which proves the claim. 
\end{proof}

The situation is much different for \emph{maximal ideals}, i.e., ideals which are maximal with respect to inclusion. In this regard, we recall if $\mathcal{I}$ is a maximal ideal then every bounded real sequence $x$ is $\mathcal{I}$-convergent, i.e., there exists $\ell \in \mathbf{R}$ such that $\{n: |x_n-\ell|\ge \varepsilon\} \in \mathcal{I}$ for every $\varepsilon>0$, cf. \cite[Theorem 2.2]{MR2181783}.

Let $B(a,r)$ the open ball with center $a$ and radius $r$ in a given metric space $(X,d)$, and denote by $\mathrm{diam}\, S$ the diameter of a non-empty set $S\subseteq X$, namely, $\sup_{a,b \in S}d(a,b)$. Then, the metric space is said to be \emph{smooth} if
\begin{equation}\label{eq:openclosedball}
\lim_{k\to \infty}\sup_{a \in X} \mathrm{diam} \,\overline{B(a,\nicefrac{1}{k})}=0.
\end{equation}
Note that \eqref{eq:openclosedball} holds if, e.g., the closure of each open ball $B(a,r)$ coincides with the corresponding closed ball $\{b \in X: d(a,b) \le r\}$.
\begin{prop}\label{prop:maximal}
Let $x$ be a sequence taking values in a smooth compact metric space $X$ 
and let $\mathcal{I}$ be a maximal ideal. Then $x$ has exactly one $\mathcal{I}$-cluster point. In particular, $\Lambda_x(\mathcal{I})$ is closed.
\end{prop}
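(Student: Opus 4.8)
The plan is to first establish uniqueness of the $\mathcal{I}$-cluster point, then deduce that $\Lambda_x(\mathcal{I})$ is closed. For the first part, I would begin by noting that $\Gamma_x(\mathcal{I})$ is always non-empty when $X$ is compact: if it were empty, then for each $\ell \in X$ there would be an open neighborhood $U_\ell$ with $\{n: x_n \in U_\ell\} \in \mathcal{I}$, and by compactness finitely many such $U_\ell$ would cover $X$, forcing $\mathbf{N} \in \mathcal{I}$, a contradiction. So $\Gamma_x(\mathcal{I}) \neq \emptyset$.

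Next I would show $\Gamma_x(\mathcal{I})$ cannot contain two distinct points. Suppose $\ell_1, \ell_2 \in \Gamma_x(\mathcal{I})$ with $\ell_1 \neq \ell_2$. Pick $r>0$ small enough that $\overline{B(\ell_1,r)}$ and $\overline{B(\ell_2,r)}$ are disjoint; using smoothness \eqref{eq:openclosedball}, for large $k$ every closed ball $\overline{B(a,\nicefrac1k)}$ has diameter less than $r$, so $x$ decomposes: the sets $P_1 := \{n: x_n \in \overline{B(\ell_1,r)}\}$ and $P_2 := \{n: x_n \in \overline{B(\ell_2,r)}\}$ are disjoint, and both lie outside $\mathcal{I}$ since $\ell_1,\ell_2$ are $\mathcal{I}$-cluster points (a cluster point forces $\{n: x_n \in B(\ell_i,r)\} \notin \mathcal{I}$, and this is a subset of $P_i$). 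But $\mathcal{I}$ maximal means $\mathcal{I}$ is a prime ideal: for disjoint sets $P_1, P_2$, they cannot both avoid $\mathcal{I}$ unless... actually one needs to argue via the dual ultrafilter. Since $\mathcal{I}$ is maximal, its complement $\mathcal{I}^c$ is an ultrafilter; $P_1 \notin \mathcal{I}$ and $P_2 \notin \mathcal{I}$ mean $P_1, P_2 \in \mathcal{I}^c$, hence $P_1 \cap P_2 \in \mathcal{I}^c$, contradicting $P_1 \cap P_2 = \emptyset \in \mathcal{I}$. This gives uniqueness; call the unique cluster point $\ell$.

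For the final assertion, I would invoke the general fact recalled in the excerpt that $\Lambda_x(\mathcal{I}) \subseteq \Gamma_x(\mathcal{I})$ and $\Gamma_x(\mathcal{I})$ is closed. Since $\Gamma_x(\mathcal{I}) = \{\ell\}$ is a singleton, $\Lambda_x(\mathcal{I})$ is either $\emptyset$ or $\{\ell\}$; in either case it is closed. (One could even note that for a maximal ideal and a convergent-along-$\mathcal{I}$ bounded sequence, $\Lambda_x(\mathcal{I}) = \{\ell\}$ exactly, but that is not needed for the stated conclusion.)

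The main obstacle I anticipate is the careful handling of the smoothness hypothesis: the natural separation one wants is between \emph{closed} balls, but the definition of $\mathcal{I}$-cluster point is phrased via open neighborhoods, so I must ensure the sets $P_i$ defined with closed balls still witness membership outside $\mathcal{I}$. This is fine because $\{n: x_n \in B(\ell_i,r)\} \subseteq P_i$ and the former is not in $\mathcal{I}$, so $P_i \notin \mathcal{I}$ by the ideal being closed under subsets. The role of \eqref{eq:openclosedball} is only to guarantee that, after shrinking $r$, the two closed balls are genuinely disjoint — or more precisely that we can choose $r$ for which disjointness of the \emph{closed} balls holds; since $\ell_1 \neq \ell_2$ in a Hausdorff (metric) space this is automatic for small enough $r$ even without smoothness, so in fact the smoothness hypothesis is what is used elsewhere (e.g. to control $\Lambda_x(\mathcal{I})$ itself) rather than for mere uniqueness of the cluster point; I would double-check whether the intended argument uses smoothness essentially here or whether compactness plus the ultrafilter property already suffice.
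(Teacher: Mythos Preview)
Your argument is correct and is genuinely different from the paper's. The paper proceeds \emph{constructively}: using total boundedness it covers $X$ by finitely many balls of radius $1/k$, partitions $X$ accordingly, and uses maximality of $\mathcal{I}$ to select a unique piece $C_{k,i_k}$ whose index set lies outside $\mathcal{I}$; the nested closures $G_k=\overline{C_{1,i_1}\cap\cdots\cap C_{k,i_k}}$ have diameters tending to $0$ (this is where the smoothness hypothesis \eqref{eq:openclosedball} is invoked, since $G_k\subseteq\overline{B_{k,i_k}}$), and compactness yields a single intersection point, which is then shown to be the unique $\mathcal{I}$-cluster point. Your route instead separates existence and uniqueness: existence via the standard finite-subcover contradiction, uniqueness via the ultrafilter dual of $\mathcal{I}$ applied to two disjoint neighbourhoods. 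Your approach is shorter, more conceptual, and---as you correctly suspect in your final paragraph---does not use smoothness at all (nor does it need closed balls: disjoint open balls already give disjoint index sets $\{n:x_n\in B(\ell_i,r)\}\notin\mathcal{I}$, and maximality forces one of two disjoint sets into $\mathcal{I}$). The paper's approach, on the other hand, actually \emph{locates} the cluster point as the intersection of an explicit descending chain, which is occasionally useful in applications. Your deduction of the ``in particular'' clause from $\Lambda_x(\mathcal{I})\subseteq\Gamma_x(\mathcal{I})=\{\ell\}$ matches the paper exactly.
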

\begin{proof}
Since $X$ is a compact metric space, 
then $X$ is totally bounded, i.e., for each $\varepsilon>0$ there exist finitely many open balls with radius $\varepsilon$ covering $X$. Moreover, it is well known that an ideal $\mathcal{I}$ is maximal if and only if either $A \in \mathcal{I}$ or $A^c \in \mathcal{I}$ for every $A\subseteq \mathbf{N}$.

Hence, fix $k \in \mathbf{N}$, let $\{B_{k,1},\ldots,B_{k,m_k}\}$ be a cover of $X$ of open balls with radius $\nicefrac{1}{k}$, and define $\mathscr{C}_{k,i}:=\{n: x_n \in C_{k,i}\}$ for each $i\le m_k$, where $C_{k,i}:=B_{k,i}\setminus (B_{k,1} \cup \cdots \cup B_{k,i-1})$ and $B_{k,0}:=\emptyset$. 
Considering that 
$\{\mathscr{C}_{k,1},\ldots,\mathscr{C}_{k,m_k}\}$ is a partition of $\mathbf{N}$, 
it follows by the above observations that there exists a unique $i_k \in \{1,\ldots,m_k\}$ for which $\mathscr{C}_{k,i_k} \notin \mathcal{I}$.

At this point, let $(G_k)$ be the decreasing sequence of closed sets defined by 
$$
G_k:=\overline{C_{1,i_1} \cap \cdots \cap C_{k,i_k}}
$$
for all $k$. Note that each $G_k$ is non-empty, the diameter of $G_k$ (which is contained in $\overline{B_{k,i_k}}$) 
goes to $0$ as $k\to \infty$, and 
$\{n: x_n \in G_k\} \notin \mathcal{I}$ 
for all $k$. Since $X$ is a compact metric space, 
then $\bigcap_k G_k$ is a singleton $\{\ell\}$. Considering that every open ball with center $\ell$ contains some $G_k$ with $k$ sufficiently large, it easily follows that $\Gamma_x(\mathcal{I})=\{\ell\}$. In particular, since each $\mathcal{I}$-limit point is also an $\mathcal{I}$-cluster point, we conclude that $\Lambda_x(\mathcal{I})$ is either empty or the singleton $\{\ell\}$.
\end{proof}

\begin{cor}\label{cor:maximal}
An ideal $\mathcal{I}$ is maximal if and only if every real sequence $x$ has at most one $\mathcal{I}$-limit point.
\end{cor}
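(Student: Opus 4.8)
The plan is to derive both implications of the equivalence, using Proposition~\ref{prop:maximal} for the forcing direction and an explicit two-valued sequence for its converse.

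For the ``only if'' direction, suppose $\mathcal{I}$ is maximal and fix a real sequence $x=(x_n)$. Since $\mathbf{R}$ is not compact, the idea is to transport the situation into a smooth compact metric space. Choose a homeomorphism $h\colon \mathbf{R}\to (0,1)$, for instance $h(t)=\tfrac12+\tfrac1\pi\arctan t$, and view $(0,1)$ as a subset of the unit interval $[0,1]$ with the Euclidean metric. The space $[0,1]$ is compact and it is smooth in the sense of \eqref{eq:openclosedball}, because the closure of the open ball $B(a,\nicefrac1k)$ is contained in $\{b\in[0,1]:|b-a|\le\nicefrac1k\}$, which has diameter at most $\nicefrac2k$. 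Applying Proposition~\ref{prop:maximal} to the sequence $y=(h(x_n))$, which takes values in the smooth compact metric space $[0,1]$, we obtain that $y$ has at most one $\mathcal{I}$-limit point. Now if $\ell\in\mathbf{R}$ is an $\mathcal{I}$-limit point of $x$, witnessed by a subsequence $(x_{n_k})$ with $\{n_k:k\in\mathbf{N}\}\notin\mathcal{I}$ and $x_{n_k}\to\ell$, then continuity of $h$ gives $y_{n_k}=h(x_{n_k})\to h(\ell)$, so $h(\ell)$ is an $\mathcal{I}$-limit point of $y$. Since $h$ is injective and $y$ has at most one $\mathcal{I}$-limit point, $x$ has at most one $\mathcal{I}$-limit point.

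For the ``if'' direction I would argue by contraposition, invoking the standard characterization already recalled in the proof of Proposition~\ref{prop:maximal}: $\mathcal{I}$ is maximal if and only if $A\in\mathcal{I}$ or $A^c\in\mathcal{I}$ for every $A\subseteq\mathbf{N}$. Assume $\mathcal{I}$ is not maximal, and fix $A\subseteq\mathbf{N}$ with $A\notin\mathcal{I}$ and $A^c\notin\mathcal{I}$. Define the real sequence $x$ by $x_n=0$ if $n\in A$ and $x_n=1$ if $n\in A^c$. Then the subsequence indexed by $A$ is constantly $0$ with index set $A\notin\mathcal{I}$, so $0\in\Lambda_x(\mathcal{I})$; likewise the subsequence indexed by $A^c$ is constantly $1$ with index set $A^c\notin\mathcal{I}$, so $1\in\Lambda_x(\mathcal{I})$. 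Thus $x$ has two distinct $\mathcal{I}$-limit points, contradicting the hypothesis, and hence $\mathcal{I}$ must be maximal.

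There is no real obstacle here; the only point requiring attention is the mismatch between the compactness assumption in Proposition~\ref{prop:maximal} and the non-compactness of $\mathbf{R}$, which is resolved by the homeomorphic embedding of $\mathbf{R}$ into $[0,1]$ above. The endpoints $0$ and $1$ of $[0,1]$ lie outside the image $h(\mathbf{R})=(0,1)$, but this is harmless: it can only decrease the number of $\mathcal{I}$-limit points when passing back to $x$, so ``at most one'' is preserved.
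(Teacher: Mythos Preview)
Your proof is correct, and the converse direction is essentially identical to the paper's (the paper writes $x_n=\chi_A(n)$, you write $x_n=1-\chi_A(n)$; the argument is the same).

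For the forward direction you take a genuinely different route. The paper argues by a case distinction on whether $\{n:|x_n|>k\}\in\mathcal{I}$ for some $k$: in the ``bounded modulo $\mathcal{I}$'' case it modifies $x$ on an $\mathcal{I}$-set to a sequence in $[-k,k]$, invokes an external result \cite[Theorem 4]{MR2923430} to identify the $\mathcal{I}$-limit sets, and then applies Proposition~\ref{prop:maximal}; in the unbounded case it shows directly that $\Lambda_x(\mathcal{I})=\emptyset$. Your approach is cleaner: by pushing $x$ through a homeomorphism $h\colon\mathbf{R}\to(0,1)\subset[0,1]$ you avoid the case split entirely, and the injection $\Lambda_x(\mathcal{I})\hookrightarrow\Lambda_y(\mathcal{I})$, $\ell\mapsto h(\ell)$, immediately transfers the bound $|\Lambda_y(\mathcal{I})|\le 1$ back to $x$. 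This makes the argument self-contained (no appeal to \cite{MR2923430}) and slightly more conceptual. The paper's route, on the other hand, extracts the additional information that in the unbounded-modulo-$\mathcal{I}$ case one actually has $\Lambda_x(\mathcal{I})=\emptyset$, which your embedding hides (the unique $\mathcal{I}$-cluster point of $y$ is then an endpoint of $[0,1]$, outside $h(\mathbf{R})$).
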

\begin{proof}
First, let us assume that $\mathcal{I}$ is a maximal ideal. Let us suppose that there exists $k>0$ such that $A_k:=\{n: |x_n|>k\} \in \mathcal{I}$ and define a sequence $y=(y_n)$ by $y_n=k$ if $n \in A_k$ and $y_n=x_n$ otherwise. Then, it follows by \cite[Theorem 4]{MR2923430} and Proposition \ref{prop:maximal} that there exists $\ell \in \mathbf{R}$ such that $\Lambda_x(\mathcal{I})=\Lambda_y(\mathcal{I})\subseteq \Gamma_y(\mathcal{I})=\{\ell\}$. 

Now, assume that $A_k^c 
\in \mathcal{I}$ for all $k \in \mathbf{N}$. Hence, letting $z=(z_n)$ be the sequence defined by $z_n=x_n$ if $n \in A_k$ and $z_n=k$ otherwise, we obtain 
$$
\Lambda_x(\mathcal{I})=\Lambda_z(\mathcal{I}) \subseteq \mathrm{L}_z \subseteq \mathbf{R}\setminus (-k,k).
$$
Therefore, it follows by the arbitrariness of $k$ that $\Lambda_x(\mathcal{I})=\emptyset$.

Conversely, let us assume that $\mathcal{I}$ is not a maximal ideal. Then there exists $A \subseteq \mathbf{N}$ such that $A\notin \mathcal{I}$ and $A^c \notin \mathcal{I}$. Then, the sequence $(x_n)$ defined by $x_n=\chi_A(n)$ for each $n$ has two $\mathcal{I}$-limit points.
\end{proof}

We conclude by showing that there exists an ideal $\mathcal{I}$ and a real sequence $x$ such that $\Lambda_x(\mathcal{I})$ is not an $F_\sigma$-set.

\begin{example}\label{eq:nofsigma}
Fix a partition $\{P_m: m \in \mathbf{N}\}$ of $\mathbf{N}$ such that each $P_m$ is infinite. Then, define the ideal 
$$
\mathcal{I}:=\{A \subseteq \mathbf{N}: \{m: A \cap P_m \notin \mathrm{Fin}\} \in \mathrm{Fin}\},
$$ 
which corresponds to the Fubini product $\mathrm{Fin} \times \mathrm{Fin}$ on $\mathbf{N}^2$ (it is known that $\mathcal{I}$ is a $F_{\sigma\delta\sigma}$-ideal and it is not a P-ideal). 
Given a real sequence $x=(x_n)$, let us denote by $x \upharpoonright P_m$ the subsequence $(x_n: n \in P_m)$. Hence, a real $\ell$ is an $\mathcal{I}$-limit point of $x$ 
if and only if there exists a subsequence $(x_{n_k})$ converging to $\ell$ such that $\{n_k: k \in \mathbf{N}\} \cap P_m$ is infinite for infinitely many $m$. Moreover, for each $m$ of this type, the subsequence $(x_{n_k}) \upharpoonright P_m$ converges to $\ell$. It easily follows that
\begin{equation}\label{eq:finxfin}
\Lambda_x(\mathcal{I})=\bigcap_k \bigcup_{m\ge k} \mathrm{L}_{x \upharpoonright P_m}.
\end{equation}
(In particular, since each $\mathrm{L}_{x \upharpoonright P_m}$ is closed, then $\Lambda_x(\mathcal{I})$ is an $F_{\sigma \delta}$-set.)

At this point, let $(q_t: t \in \mathbf{N})$ be the sequence $(\nicefrac{0}{1},\nicefrac{1}{1},\nicefrac{0}{2},\nicefrac{1}{2},\nicefrac{2}{2},\nicefrac{0}{3},\nicefrac{1}{3},\nicefrac{2}{3},\nicefrac{3}{3},\ldots)$, where $q_t:=a_t/b_t$ for each $t$, 
and note that $\{q_t: t \in \mathbf{N}\}=\mathbf{Q} \cap [0,1]$. 
It follows by construction that $a_t \le b_t$ for all $t$ and $b_t = \sqrt{2t}(1+o(1))$ as $t\to \infty$. In particular, 
if $m$ is a sufficiently large integer, then
\begin{equation}\label{eq:minqi}
\min_{i \le m:\, q_i \neq q_m}\,|q_i-q_m| \ge \left(\frac{1}{\sqrt{2m}(1+o(1))}\right)^2 > \frac{1}{3m}.
\end{equation}

Lastly, for each $m \in \mathbf{N}$, define the closed set
$$
C_m:=[0,1] \cap \bigcap_{t\le m}\left(q_t-\frac{1}{2^m},q_t+\frac{1}{2^m}\right)^c.
$$
We obtain by \eqref{eq:minqi} that, if $m$ is sufficiently large, let us say $\ge k_0$, then 
$$
C_m \cup C_{m+1} = [0,1] \cap \bigcap_{t\le m}\left(q_t-\frac{1}{2^{m+1}},q_t+\frac{1}{2^{m+1}}\right)^c.
$$
It follows by induction that
$$
C_m \cup C_{m+1} \cup \cdots \cup C_{m+n} = [0,1] \cap \bigcap_{t\le m}\left(q_t-\frac{1}{2^{m+n}},q_t+\frac{1}{2^{m+n}}\right)^c.
$$
for all $n \in \mathbf{N}$. In particular, $\bigcup_{m \ge k}C_m = [0,1]\setminus \{q_1,\ldots,q_k\}$ whenever $k \ge k_0$.

Let $x$ be a real sequence such that each $\{x_n: n \in P_m\}$ is a dense subset of $C_m$. Therefore, it follows by \eqref{eq:finxfin} that
\begin{displaymath}
\Lambda_x(\mathcal{I})=\bigcap_k \bigcup_{m\ge k} C_m \subseteq  \bigcap_{k \ge k_0} \bigcup_{m\ge k} C_m = \bigcap_{k \ge k_0} \,[0,1]\setminus \{q_1,\ldots,q_k\}=[0,1]\setminus \mathbf{Q}.
\end{displaymath}
On the other hand, if a rational $q_t$ belongs to $\Lambda_x(\mathcal{I})$, then $q_t \in \bigcup_{m\ge k} C_m$ for all $k \in \mathbf{N}$, which is impossible whenever $k \ge t$. This proves that $\Lambda_x(\mathcal{I})=[0,1]\setminus \mathbf{Q}$, which is not an $F_\sigma$-set.
\end{example}


\bigskip



We leave as an open question to determine whether there exists a real sequence $x$ and an ideal $\mathcal{I}$ such that $\Lambda_x(\mathcal{I})$ is not Borel measurable.


\section*{Acknowledgments}
The authors are greatful to Szymon G\l \k{a}b (\L{}\'{o}d\'{z} University of Technology, PL) for suggesting to investigate the ideal $\mathrm{Fin} \times \mathrm{Fin}$ in Example \ref{eq:nofsigma}.


\end{document}